\documentclass[runningheads,svgnames]{llncs}
\usepackage[T1]{fontenc}
\usepackage{graphicx}
\usepackage{amsmath}
\usepackage{amssymb}
\usepackage{amsfonts}
\usepackage{tikz}
\usepackage{float}
\usepackage{hyperref}
\usepackage{cleveref}
\usepackage{tikz-network}
\usetikzlibrary{positioning, quotes}
\usepackage{comment}
\usepackage{orcidlink}
\usepackage{marvosym}

\spnewtheorem*{notation}{Notation}{\bfseries}{\upshape}

\begin{document}

\title{On Eternal Connected Vertex Cover}

\author{Rajat Adak \and Saraswati Girish Nanoti}
\authorrunning{R. Adak\and S.G. Nanoti}
\institute{Department of Computer Science and Automation, Indian Institute of Science, Bengaluru, India\\
\email{rajatadak@iisc.ac.in, saraswatig@iisc.ac.in}
} 

\maketitle
\begin{abstract}
For a connected graph $G$ with at least one edge, the \textit{eternal connected vertex cover} number $ecvc(G)$ is the minimum number of guards that can maintain a connected vertex cover after every response to an arbitrary sequence of edge attacks. Denote the minimum size of a connected vertex cover by $cvc(G)$. It is known that $cvc(G)\leq ecvc(G)\leq cvc(G)+1$. A necessary condition for $ecvc(G)=cvc(G)$ is that every vertex belongs to some minimum connected vertex cover. We show that this condition is not sufficient: there exists a $32$-vertex graph $G$ that has $cvc(G)=19$ and $ecvc(G)=20$, although every vertex of $G$ belongs to some minimum connected vertex cover.

For connected graphs with minimum degree at least two, we establish the sharp bound $cvc(G)\geq 2|V(G)|-|E(G)|-1$ and $\mathcal F$ denotes the class attaining equality. We prove that $G\in\mathcal F$ if and only if the vertices of degree at least three induce a forest. Within $\mathcal F$, the conditions $ecvc(G)=cvc(G)$, membership of every vertex in some minimum connected vertex cover, and the presence of at least two degree-two vertices on every cycle are equivalent. As applications, we obtain $ecvc(G)=cvc(G)$ for full subdivisions of connected graphs of minimum degree at least two and for minimally $2$-connected graphs. In both the families, every minimum connected vertex cover is an eternally winning configuration. This is not true in general for graphs outside $\mathcal{F}$, we show one example of such a graph.
\end{abstract}
\keywords{Eternal connected vertex cover \and Eternal vertex cover
\and Connected vertex cover \and Full subdivisions
\and Minimally $2$-connected graphs}

\section{Introduction}
Let $G$ be a finite simple connected graph with at least one edge. A
\emph{vertex cover} meets every edge of $G$; it is a \emph{connected vertex
cover} if it also induces a connected subgraph. We denote the
minimum size of a connected vertex cover by $cvc(G)$. In the eternal vertex cover (EVC)
game of Klostermeyer and Mynhardt~\cite{klostermeyer2009edge}, guards must
maintain a vertex cover against an arbitrary sequence of edge attacks.
Equality between the eternal vertex cover number and the ordinary vertex
cover number has since been studied from a structural viewpoint
\cite{babu2022graphs,misra2025characterization}, alongside algorithms for particular
graph classes~\cite{paul2023some}, etc.

The \emph{eternal connected vertex cover} (ECVC) problem was introduced by
Fujito and Nakamura~\cite{fujito2020eternal}. In this game played on a connected $n$-vertex graph $G$ ($n\geq 2$), there are two players namely the attacker and the defender. The defender chooses a set of $k$ vertices which induce a connected subgraph. In each move, the attacker attacks an edge. The defender must move at least one guard across this edge. Every other guard may stay put or move to a neighboring vertex, i.e., the defender can move any subset of the remaining $k-1$ guards to their neighboring vertices. It is requires the occupied
vertices to induce a connected graph. After this, the attacker attacks again and so on. If the attacker attacks an edge $uv$ such that both $u$ and $v$ do not have a guard, the attacker wins. If the guards do not induce a connected subgraph after the movement is completed, the attacker wins. Therefore, the set of vertices occupied by guards must remain a connected vertex cover after every response. Also, at most one guard must occupy each vertex. The
least number of guards with which the defender can defend every infinite sequence of attacks is denoted
by $ecvc(G)$. Fujito and Nakamura established the bounds
\[
    cvc(G)\leq ecvc(G)\leq cvc(G)+1
\]
and studied ECVC on chordal graphs and several classes defined by their
blocks. The upper bound also holds for $evc(G)$, shown by Klostermeyer and Mynhardt in \cite{klostermeyer2009edge}.  We prove these bounds here for the sake of completeness.

\begin{lemma}[\cite{fujito2020eternal}]
    For any connected graph $G$ with at least two vertices, we have $cvc(G)\leq ecvc(G)\leq cvc(G)+1$.
\end{lemma}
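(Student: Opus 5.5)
The lower bound is immediate from the rules of the game. After every response the occupied vertices must form a connected vertex cover. Hence any winning defender needs at least $cvc(G)$ guards, which gives $cvc(G)\leq ecvc(G)$.

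For the upper bound I would give an explicit strategy with $cvc(G)+1$ guards, maintaining the following invariant. The occupied set $S$ satisfies $|S|=cvc(G)+1$, the graph $G[S]$ is connected, and $S$ contains a minimum connected vertex cover $C$. Write $x$ for the single vertex of $S\setminus C$. The invariant implies $S$ is a connected vertex cover, so the attacker cannot win at the current step. To set it up initially, choose any minimum connected vertex cover $C$. Since $G$ is connected with at least two vertices, either $C\neq V(G)$, in which case some vertex $x\notin C$ is adjacent to $C$; or $C=V(G)$, which happens only when $cvc(G)=|V(G)|$. I would dispose of this degenerate case separately (e.g.\ $G=K_2$, where $cvc(G)=1$ and two guards trivially suffice). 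Then put $S=C\cup\{x\}$.

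Now consider an attack on an edge $uv$. If both $u$ and $v$ are occupied, the guards on $u$ and $v$ swap along the edge. The occupied set does not change, so the invariant is preserved. Otherwise exactly one endpoint, say $u$, is occupied, and $v\notin S$. Because $C$ is a vertex cover and $v\notin C$, we get $u\in C$. Build a tree $T$ on $C\cup\{x,v\}$ as follows: take a spanning tree of the connected graph $G[C]$, add the edge $uv$, and add one edge from $x$ to a neighbour of $x$ in $C$. Such a neighbour exists because $G[S]$ is connected and $|C|\geq 1$. In $T$ both $x$ and $v$ are leaves. Let $P$ be the $x$--$v$ path in $T$; its last edge is $uv$. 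Move every guard on $P$ one step toward $v$, and leave all other guards in place. Then a guard crosses $uv$, each guard moves along an edge, and no two guards end on the same vertex. The new occupied set is $S'=C\cup\{v\}$, which contains the minimum connected vertex cover $C$. It induces a connected graph because $T-x$ is a spanning tree of it, and $|S'|=|S|$. So the invariant holds with $C$ unchanged and new extra vertex $v$.

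The main point requiring care is to choose the invariant so that the vacated vertex never leaves an edge uncovered. Keeping an entire minimum connected vertex cover $C$ fixed inside $S$, and always vacating only the extra vertex $x$, handles this. The rest is routine checking that the path shift is a legal move, i.e.\ that guards move along edges and that at most one guard occupies each vertex.
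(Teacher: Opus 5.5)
Your proof is correct and is essentially the paper's argument. Both keep a minimum connected vertex cover fully guarded plus one extra guard, and answer an attack on an unguarded vertex by shifting guards one step along a path from the extra guard, through the cover, across the attacked edge (the paper takes the path $v,z=v_1,\dots,v_\ell=u$ directly in $G[S]$ rather than extracting it from a spanning tree).

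The one loose end is your ``degenerate case'' $C=V(G)$, which never arises. Deleting a leaf of a spanning tree of $G$ leaves a connected vertex cover of size $|V(G)|-1$, so $cvc(G)\leq |V(G)|-1$. Also, $K_2$ is not an instance of that case, since $cvc(K_2)=1$.
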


\begin{proof}
    The set of vertices occupied by the guards in the graph $G$ must be a connected vertex cover of $G$ as seen above. Hence, the number of guards required for the defender to win cannot be less than the size of the smallest connected vertex cover of $G$. Therefore, $ecvc(G)\geq cvc(G)$. Now consider a configuration where there are guards on each vertex of a minimum-sized vertex cover $S$ of $G$, and one guard on a vertex $v\notin S$. Since $S$ is a vertex cover, there is no edge of $G$ with both its endpoints outside $S$. If the attacker attacks an edge $xy$ such that both $x$ and $y$ have a guard, the guards on $x$ and $y$ exchange their position, and all the other guards stay on their position. Thus, there is no change in the configuration. Suppose that an edge $uw$ such that $u\in S$ and $w\notin S\cup\{v\}$ is attacked. Since $S$ is a connected vertex cover of $G$, and $G$ is connected, $v$ has a neighbor $z\in S$, and there exists a path $v_1(=z),v_2\ldots v_\ell(=u)$ such that $v_i\in S\,\forall i\in [1,\ell]$. The guard on $u$ moves to $w$, the guard on $v_i$ moves to $v_{i+1}$ for each $i\in [1,\ell-1]$, and the guard on $v$ moves to $z$. Thus, each vertex in $S$ has a guard and there is one guard outside $S$ (on $w$ instead of $v$). Thus, the defender can use this strategy to defend an infinite sequence of attacks on $G$, hence $ecvc(G)\leq cvc(G)+1$.
\qed\end{proof}

More recent work gives complexity results and algorithms for chain
graphs, cographs and other graph classes~\cite{paul2025eternal}. The tight
one-guard gap makes it natural to ask which graphs attain the lower bound.

If $ecvc(G)=cvc(G)$, every vertex belongs to some minimum connected vertex
cover. Indeed, start from a winning configuration with $cvc(G)$ guards. If a
vertex $v$ is unoccupied, an attack on an edge incident with $v$ forces a
guard onto $v$, and the resulting configuration is again a minimum connected
vertex cover. It was asked in \cite{fujito2020eternal} and \cite{paul2025eternal} whether this necessary condition is sufficient. Our first result shows that this necessary condition is not
sufficient: we construct a $32$-vertex graph $G$ in which every vertex
belongs to a minimum connected vertex cover, but $cvc(G)=19$ and
$ecvc(G)=20$. We also examine a one-edge modification $G^\prime$ to study
winning configurations among its minimum connected vertex covers.

Also, observe that in a graph with more than one edge and a degree one vertex, the necessary condition cannot hold, because the degree one vertex cannot belong to a minimum-sized connected vertex cover. Hence, we focus on graphs with minimum degree at least two.

We then identify a class in which the vertex-membership condition does
characterize equality. For every connected graph with minimum degree at
least two, we prove the sharp bound
\[
    cvc(G)\geq 2|V(G)|-|E(G)|-1.
\]
We denote $\mathcal F$ as the class that consists of the graphs attaining equality. We show that a graph belongs
to $\mathcal F$ exactly when its vertices of degree at least three induce
a forest. Within $\mathcal F$, the conditions $ecvc(G)=cvc(G)$, membership
of every vertex in some minimum connected vertex cover, and the presence of
at least two degree-two vertices on every cycle are equivalent. In this
case, every minimum connected vertex cover is an eternally winning
configuration. The criterion gives exact formulas for full subdivisions of
connected graphs of minimum degree at least two and for minimally
$2$-connected graphs.

Section~\ref{sec:Counter-example} constructs the counterexample, and
Section~\ref{sec:modified-example} discusses its one-edge modification.
Section~\ref{sec:structural-criterion} proves the structural criterion and
its recognition form. Section~\ref{sec:applications} gives the two
applications and revisits the counterexample after subdivision.
\section{Insufficiency of the vertex-membership condition}\label{sec:Counter-example}
Here we describe a graph $G(V,E)$ such that every vertex of $G$ belongs to some minimum-sized connected vertex cover of $G$, but $ecvc(G)\neq cvc(G)$. There are $6$ \emph{support vertices} divided into two sets: $A=\{a_1,a_2,a_3\}$ and $C=\{c_1,c_2,c_3\}$. There are $12$ \emph{private} vertices, each support vertex is adjacent to two of these private vertices, which we call as its \emph{private} neighbors. The set of \emph{private} vertices is denoted by $P$. The two private neighbors of the same support vertex are also adjacent to each other. We call the triangle formed by a support vertex and its two private neighbors as a \emph{private triangle}. There are $6$ more vertices divided into two sets $Q:=\{q_1,q_2,q_3\}$ and $R:=\{r_1,r_2,r_3\}$. For $i\in [1,3]$, each $a_i$ and $c_i$ are both adjacent to each $q_i$ and $r_i$. That is, each $G[a_i,q_i,c_i,r_i]$ is a $4$-cycle with $a_i$ and $c_i$ being non-adjacent. There are $8$ more vertices divided into two sets $Z:=\{z_0, z_{12}, z_{23}, z_{13}\}$ and $U:=\{u_0,u_{12}, u_{23}, u_{13}\}$. For each $i<j$, and $i,j\in [1,3]$, each $z_{ij}$ is adjacent to $a_i$ and $a_j$, and each $u_{ij}$ is adjacent to $c_i$ and $c_j$. The vertex $z_0$ is adjacent to $q_1,q_2$ and $a_1$. Similarly, the vertex $u_0$ is adjacent to $r_1,r_2$ and $c_1$. Also, for each $i<j$, and $i,j\in [1,3]$, each $z_{ij}$ is adjacent to $q_i$ and $q_j$, and each $u_{ij}$ is adjacent to $r_i$ and $r_j$. The description of the graph is now complete. It can be seen that $V=A\cup C \cup P \cup Q \cup R \cup U \cup Z$ and $|V|=32$. We also sometimes denote the vertices in a set $S$ as $S$-vertices, for ease of notation, for example, the vertices in $A$ can be denoted as $A$-vertices.

\begin{figure}[tb]
\centering
\begin{tikzpicture}[x=1cm,y=1cm,
  line cap=round,line join=round,
  every node/.style={font=\small},
  edge/.style={draw=black,line width=.5pt,preaction={draw=white,line width=1.8pt}},
  vertex/.style={circle,draw=black,fill=white,line width=.5pt,inner sep=0pt,minimum size=4pt},
  support/.style={vertex,fill=black,minimum size=4.5pt},
  private/.style={circle,fill=black,draw=black,inner sep=0pt,minimum size=3.2pt},
  lab/.style={inner sep=1pt}
]
\coordinate (a1) at (-0.850,1.200);
\coordinate (q1) at (0.850,1.200);
\coordinate (c1) at (0.850,-1.200);
\coordinate (r1) at (-0.850,-1.200);
\coordinate (pa11) at (-0.580,0.500);
\coordinate (pa12) at (-0.050,0.880);
\coordinate (pc11) at (0.580,-0.500);
\coordinate (pc12) at (0.050,-0.880);
\coordinate (a2) at (3.350,1.200);
\coordinate (q2) at (5.050,1.200);
\coordinate (c2) at (5.050,-1.200);
\coordinate (r2) at (3.350,-1.200);
\coordinate (pa21) at (3.620,0.500);
\coordinate (pa22) at (4.150,0.880);
\coordinate (pc21) at (4.780,-0.500);
\coordinate (pc22) at (4.250,-0.880);
\coordinate (a3) at (7.550,1.200);
\coordinate (q3) at (9.250,1.200);
\coordinate (c3) at (9.250,-1.200);
\coordinate (r3) at (7.550,-1.200);
\coordinate (pa31) at (7.820,0.500);
\coordinate (pa32) at (8.350,0.880);
\coordinate (pc31) at (8.980,-0.500);
\coordinate (pc32) at (8.450,-0.880);
\coordinate (z12) at (2.800,2.500);
\coordinate (z23) at (5.600,2.500);
\coordinate (z13) at (4.200,4.400);
\coordinate (z0) at (1.900,1.650);
\coordinate (u12) at (2.800,-2.500);
\coordinate (u23) at (5.600,-2.500);
\coordinate (u13) at (4.200,-4.400);
\coordinate (u0) at (1.900,-1.650);
\draw[edge] (a1) -- (pa11);
\draw[edge] (a1) -- (pa12);
\draw[edge] (pa11) -- (pa12);
\draw[edge] (c1) -- (pc11);
\draw[edge] (c1) -- (pc12);
\draw[edge] (pc11) -- (pc12);
\draw[edge] (a2) -- (pa21);
\draw[edge] (a2) -- (pa22);
\draw[edge] (pa21) -- (pa22);
\draw[edge] (c2) -- (pc21);
\draw[edge] (c2) -- (pc22);
\draw[edge] (pc21) -- (pc22);
\draw[edge] (a3) -- (pa31);
\draw[edge] (a3) -- (pa32);
\draw[edge] (pa31) -- (pa32);
\draw[edge] (c3) -- (pc31);
\draw[edge] (c3) -- (pc32);
\draw[edge] (pc31) -- (pc32);
\draw[edge] (a1) -- (q1);
\draw[edge] (q1) -- (c1);
\draw[edge] (c1) -- (r1);
\draw[edge] (r1) -- (a1);
\draw[edge] (a2) -- (q2);
\draw[edge] (q2) -- (c2);
\draw[edge] (c2) -- (r2);
\draw[edge] (r2) -- (a2);
\draw[edge] (a3) -- (q3);
\draw[edge] (q3) -- (c3);
\draw[edge] (c3) -- (r3);
\draw[edge] (r3) -- (a3);
\draw[edge] (z12) -- (a1);
\draw[edge] (z12) -- (a2);
\draw[edge] (z12) -- (q1);
\draw[edge] (z12) -- (q2);
\draw[edge] (z13) -- (a1);
\draw[edge] (z13) -- (a3);
\draw[edge] (z13) -- (q1);
\draw[edge] (z13) -- (q3);
\draw[edge] (z23) -- (a2);
\draw[edge] (z23) -- (a3);
\draw[edge] (z23) -- (q2);
\draw[edge] (z23) -- (q3);
\draw[edge] (u12) -- (c1);
\draw[edge] (u12) -- (c2);
\draw[edge] (u12) -- (r1);
\draw[edge] (u12) -- (r2);
\draw[edge] (u13) -- (c1);
\draw[edge] (u13) -- (c3);
\draw[edge] (u13) -- (r1);
\draw[edge] (u13) -- (r3);
\draw[edge] (u23) -- (c2);
\draw[edge] (u23) -- (c3);
\draw[edge] (u23) -- (r2);
\draw[edge] (u23) -- (r3);
\draw[edge] (z0) -- (q1);
\draw[edge] (z0) -- (q2);
\draw[edge] (z0) -- (a1);
\draw[edge] (u0) -- (r1);
\draw[edge] (u0) -- (r2);
\draw[edge] (u0) -- (c1);
\node[support] at (a1) {};
\node[vertex] at (q1) {};
\node[support] at (c1) {};
\node[vertex] at (r1) {};
\node[private] at (pa11) {};
\node[private] at (pa12) {};
\node[private] at (pc11) {};
\node[private] at (pc12) {};
\node[support] at (a2) {};
\node[vertex] at (q2) {};
\node[support] at (c2) {};
\node[vertex] at (r2) {};
\node[private] at (pa21) {};
\node[private] at (pa22) {};
\node[private] at (pc21) {};
\node[private] at (pc22) {};
\node[support] at (a3) {};
\node[vertex] at (q3) {};
\node[support] at (c3) {};
\node[vertex] at (r3) {};
\node[private] at (pa31) {};
\node[private] at (pa32) {};
\node[private] at (pc31) {};
\node[private] at (pc32) {};
\node[vertex] at (z12) {};
\node[vertex] at (z23) {};
\node[vertex] at (z13) {};
\node[vertex] at (z0) {};
\node[vertex] at (u12) {};
\node[vertex] at (u23) {};
\node[vertex] at (u13) {};
\node[vertex] at (u0) {};
\node[lab,anchor=north east,xshift=-2pt,yshift=-2pt] at (a1) {$a_{1}$};
\node[lab,anchor=north west,xshift=2pt,yshift=-2pt] at (q1) {$q_{1}$};
\node[lab,anchor=south west,xshift=2pt,yshift=2pt] at (c1) {$c_{1}$};
\node[lab,anchor=south east,xshift=-2pt,yshift=2pt] at (r1) {$r_{1}$};
\node[lab,anchor=north east,xshift=-2pt,yshift=-2pt] at (a2) {$a_{2}$};
\node[lab,anchor=north west,xshift=2pt,yshift=-2pt] at (q2) {$q_{2}$};
\node[lab,anchor=south west,xshift=2pt,yshift=2pt] at (c2) {$c_{2}$};
\node[lab,anchor=south east,xshift=-2pt,yshift=2pt] at (r2) {$r_{2}$};
\node[lab,anchor=north east,xshift=-2pt,yshift=-2pt] at (a3) {$a_{3}$};
\node[lab,anchor=north west,xshift=2pt,yshift=-2pt] at (q3) {$q_{3}$};
\node[lab,anchor=south west,xshift=2pt,yshift=2pt] at (c3) {$c_{3}$};
\node[lab,anchor=south east,xshift=-2pt,yshift=2pt] at (r3) {$r_{3}$};
\node[lab,anchor=south west,xshift=3pt,yshift=1.5pt] at (z12) {$z_{12}$};
\node[lab,anchor=south east,xshift=-3pt,yshift=1.5pt] at (z23) {$z_{23}$};
\node[lab,anchor=south,yshift=4pt] at (z13) {$z_{13}$};
\node[lab,anchor=north,yshift=-6pt] at (z0) {$z_{0}$};
\node[lab,anchor=north west,xshift=3pt,yshift=-1.5pt] at (u12) {$u_{12}$};
\node[lab,anchor=north east,xshift=-3pt,yshift=-1.5pt] at (u23) {$u_{23}$};
\node[lab,anchor=north,yshift=-4pt] at (u13) {$u_{13}$};
\node[lab,anchor=south,yshift=6pt] at (u0) {$u_{0}$};
\end{tikzpicture}
\caption{The counterexample graph $G$. The unlabeled black vertices are the private vertices.}
\label{fig:counterexample}
\end{figure}

\begin{lemma}\label{lem:triangle}
    Any connected vertex cover of $G$ must contain the vertices in $A\cup C$, and any minimum-sized connected vertex cover of $G$ must contain exactly two vertices from each private triangle: one support vertex and one private vertex.
\end{lemma}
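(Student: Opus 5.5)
We prove the two parts separately. The first part uses only the vertex-cover property of a private triangle plus connectivity. The second part is a counting argument comparing a minimum connected vertex cover with an explicit small one.

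\textbf{Every connected vertex cover contains $A\cup C$.} Let $S$ be a connected vertex cover and fix a support vertex $s\in A\cup C$ with private neighbors $p,p'$. The triangle $s p p'$ needs at least two of its vertices in $S$, since a single vertex cannot cover all three triangle edges. Suppose $s\notin S$. Then $p,p'\in S$. The only neighbors of $p$ and $p'$ are each other and $s$, so $\{p,p'\}$ is a connected component of $G[S]$. But $S$ also contains vertices outside this triangle, for instance a private vertex of another triangle. So $G[S]$ is disconnected, a contradiction. Hence $s\in S$ for every support vertex, giving $A\cup C\subseteq S$. Moreover, each private triangle still needs one of $p,p'$ in $S$ to cover the edge $pp'$. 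So every connected vertex cover contains the support vertex and at least one private vertex from each triangle.

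\textbf{A minimum cover uses exactly one private vertex per triangle.} We first exhibit a connected vertex cover of size $19$, which will serve as an upper bound on $cvc(G)$. Take $A\cup C$ (6 vertices), one private vertex per triangle (6 vertices), and $Q\cup R$ (6 vertices), giving $18$.
\begin{itemize}
\item \emph{Cover check.} The edges not yet covered would have to avoid $A\cup C\cup Q\cup R$ and the chosen private vertices. Every edge at a $Z$- or $U$-vertex goes to $A\cup C\cup Q\cup R$, and every private edge meets its support vertex or the chosen private vertex. So the set covers all edges.
\item \emph{Connectivity check.} Within each $i$, the vertices $a_i,q_i,c_i,r_i$ are connected. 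However, the three blocks $\{a_i,q_i,c_i,r_i\}$ are not adjacent to each other directly, since $q_i$ and $q_j$ are not adjacent. Adding $z_{12}$ and $z_{13}$ joins the blocks, but that gives $20$.
\end{itemize}
So I would look for a cheaper pattern using the $Z$- and $U$-vertices. For example, take $A\cup C$, six private vertices, $q_1,q_2,q_3,r_1,r_2,r_3$ replaced partly by $z$'s. The precise count of $19$ is claimed by the paper, and I expect the authors exhibit such a set explicitly. I would construct one by checking the cover constraints on the edges $q_i z_{ij}$ and $r_i u_{ij}$.

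\textbf{Exchange argument.} Given any minimum connected vertex cover $S$, suppose some triangle has both private vertices $p,p'\in S$. Removing $p'$ keeps $S$ a vertex cover, because the edges at $p'$ go only to $p$ and $s$, both in $S$. It also keeps $S$ connected, because $p'$ is a leaf-like vertex of $G[S]$ whose neighbors $p$ and $s$ remain adjacent. The smaller set contradicts minimality. Hence each private triangle contributes exactly the support vertex and one private vertex.

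This exchange argument does not actually need the value $cvc(G)=19$. The main care is only in verifying that deleting $p'$ preserves connectivity, which is immediate since $N(p')=\{p,s\}$ with $ps\in E$.
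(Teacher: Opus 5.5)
Your proof is correct and follows the paper's argument. If a support vertex $s$ is omitted, its two private neighbours form a separate component of $G[S]$; and a minimum cover containing both private vertices of a triangle can drop one of them, because that vertex's only neighbours $p$ and $s$ are adjacent, so covering and connectivity are both preserved. Your unfinished attempt to exhibit a $19$-vertex cover is unnecessary, as you yourself note, and should simply be deleted, since the exchange argument alone settles the second part.
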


\begin{proof}
    Any vertex in $A\cup C$ is a cut vertex, because deleting it causes its two private neighbors to be disconnected from the rest of the graph, and as there is an edge between these two private vertices, at least one of them must be present in a vertex cover of $G$. Thus, any vertex cover not containing a vertex from $A\cup C$ cannot be connected. 
    
    Also, to cover all the edges of a private triangle, at least two vertices of each private triangle must be present in any vertex cover of $G$. Also, if any connected vertex cover $S$ of $G$ contains all three vertices of a private triangle, then removing a private vertex of this triangle from $S$ still results in a connected vertex cover of $G$. This is because the edges of this private triangle are still covered by the support vertex and the private vertex and all other edges covered by vertices of $S$ still remain covered. Also, any path between two vertices in $S$ (other than the remaining private vertex) is still preserved, as the path either contains no vertex from this private triangle, or contains only the support vertex. The private vertex which has not been removed from $S$ is adjacent to the corresponding support vertex. Thus, the resulting vertex cover is still connected and of size strictly less than $S$, therefore, $S$ is not a minimum-sized connected vertex cover of $G$. Thus, we have shown that any minimum-sized connected vertex cover of $G$ must contain exactly two vertices from each private triangle: one support vertex and one private vertex.
    \qed
\end{proof}

\begin{lemma}\label{lem:minthree}
    Any vertex cover of $G$ must contain at least three vertices from $Q \cup Z$ and at least three vertices from $R \cup U$.
\end{lemma}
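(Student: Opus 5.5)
The subgraph induced by $Q\cup Z$ has $7$ vertices. Its edges are $z_0q_1$ and $z_0q_2$, and, for each pair $i<j$, the edges $z_{ij}q_i$ and $z_{ij}q_j$. There are no edges inside $Q$ and none inside $Z$. So $G[Q\cup Z]$ is bipartite with parts $Q$ and $Z$, and the plan is to show directly that it has no vertex cover of size at most two. Since any vertex cover of $G$ restricted to $Q\cup Z$ is a vertex cover of $G[Q\cup Z]$, this gives the first claim. The second claim, for $R\cup U$, follows verbatim by the symmetry $a_i\leftrightarrow c_i$, $q_i\leftrightarrow r_i$, $z\leftrightarrow u$, which is an automorphism of the construction.

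To do the case analysis, take a vertex cover $S$ of $G$ and set $k=|S\cap Q|$.
\begin{itemize}
\item If $k=0$, every $Z$-vertex must be in $S$, because each has a neighbor in $Q$. This gives $4\ge 3$ vertices.
\item If $k=1$, say $q_i\in S$, then $q_i$ misses at least one of $z_0$'s edges: $z_0$ is adjacent to both $q_1$ and $q_2$, so one of them is not covered. Moreover, among $z_{12},z_{13},z_{23}$ exactly two avoid index $i$ when $i\in\{1,2\}$, and exactly one ($z_{12}$) when $i=3$. I will check that in every case at least two $Z$-vertices have an uncovered edge into $Q$, forcing at least two more vertices. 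For $i=1$ these are $z_0$ (edge $z_0q_2$) and $z_{23}$; for $i=2$, $z_0$ and $z_{13}$; for $i=3$, $z_0$ and $z_{12}$. Each such edge must be covered by its $Z$ endpoint, because the other endpoint is a $Q$-vertex outside $S$. Total $\ge 3$.
\item If $k\ge 2$, either $k=3$ and we are done, or $k=2$ with missing $q_m$. Some $Z$-vertex is adjacent to $q_m$ (for $m=3$ use $z_{13}$; for $m\in\{1,2\}$ use $z_0$), so that $Z$-vertex must be in $S$. Total $\ge 3$.
\end{itemize}

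The only mildly delicate step is the $k=1$ case, where one must confirm that two distinct $Z$-vertices, not just one, are forced. The edges of $z_0$ supply the second forced vertex, and this is exactly where $z_0$ is needed in the construction. Everything else is direct inspection of the adjacency list.
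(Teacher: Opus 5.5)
Your argument is correct, but it takes a different route from the paper's. The paper does no case analysis. It exhibits the matching $\{q_1z_{12},\,q_2z_{23},\,q_3z_{13}\}$ in $G[Q\cup Z]$: its three pairwise disjoint edges each need their own endpoint in any vertex cover. It then invokes symmetry for $R\cup U$, as you do. That certificate is one line and shows that $z_0$ plays no role here, since the bound already holds on the $6$-cycle $Q\cup\{z_{12},z_{13},z_{23}\}$. So your closing remark that this is ``exactly where $z_0$ is needed'' is inaccurate. The vertex $z_0$ matters only for the uniqueness in \Cref{lem:sizethree}, where it rules out $\{z_{12},z_{13},z_{23}\}$ as a second size-three cover.

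Your split on $k=|S\cap Q|$ is longer, but it carries more information if you record all forced $Z$-vertices. For $k=1$, every $Z$-vertex has two $Q$-neighbours, one of them outside $S$, so all four are forced. For $k=2$, the missing $q_m$ has at least two $Z$-neighbours, so at least two are forced. Hence $|S\cap(Q\cup Z)|\ge 4$ unless $S\cap(Q\cup Z)=Q$, which proves \Cref{lem:sizethree} at the same time.

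One factual slip should be fixed: for $i\in\{1,2\}$ it is not true that two of $z_{12},z_{13},z_{23}$ avoid index $i$. For every $i$ exactly one does. The explicit pairs you then list are correct, so the conclusion is unaffected.
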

\begin{proof}
   The graph $H:=G[Q \cup Z]$ is bipartite with the bipartition $Q \uplus Z$. The graph $H$ also has a matching $M=\{q_1z_{12}, q_2z_{23}, q_3z_{13}\}$ of size $3$. Therefore, any vertex cover of $H$ must contain at least three vertices. Since a vertex cover of $G$ must also cover all the edges of $H$, it must contain at least three vertices from $V(H)$, i.e., $Q \cup Z$. Similarly, any vertex cover of $G$ must contain at least three vertices from $R\cup U$.
   \qed
\end{proof}

\begin{lemma}\label{lem:sizethree}
    The only vertex cover of $G[Q\cup Z]$ of size three is $Q$, and the only vertex cover of $G[R\cup U]$ of size three is $R$.
\end{lemma}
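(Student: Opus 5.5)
The plan is to work inside $H:=G[Q\cup Z]$ and show that any vertex cover of size three must be $Q$; the statement for $G[R\cup U]$ then follows by the symmetry $a_i\leftrightarrow c_i$, $q_i\leftrightarrow r_i$, $z\leftrightarrow u$. First I will write down $H$ explicitly. Its edges are $z_0q_1$, $z_0q_2$, $z_{12}q_1$, $z_{12}q_2$, $z_{13}q_1$, $z_{13}q_3$, $z_{23}q_2$ and $z_{23}q_3$. Every $Z$-vertex has degree two in $H$. The degrees on the $Q$ side are $\deg_H(q_1)=3$, $\deg_H(q_2)=3$ and $\deg_H(q_3)=2$. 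Clearly $Q$ covers all of these edges.

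Next I will take an arbitrary vertex cover $S$ of $H$ with $|S|=3$ and suppose, for contradiction, that some $q_i\notin S$. Then every $Z$-neighbour of $q_i$ must lie in $S$, and I split into three cases.

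\begin{itemize}
\item If $q_1\notin S$, then $z_0,z_{12},z_{13}\in S$. That already uses all three slots, so $S=\{z_0,z_{12},z_{13}\}$. But then the edge $z_{23}q_2$ is uncovered, a contradiction.
\item If $q_2\notin S$, then $S=\{z_0,z_{12},z_{23}\}$. Now the edge $z_{13}q_1$ is uncovered.
\item If $q_3\notin S$, then $z_{13},z_{23}\in S$, leaving one slot. The edges $z_0q_1$, $z_0q_2$, $z_{12}q_1$ and $z_{12}q_2$ form a $4$-cycle $z_0q_1z_{12}q_2$. Covering a $4$-cycle needs two vertices, so one further vertex cannot suffice.
\end{itemize}

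Hence $Q\subseteq S$, and since $|S|=3$ we get $S=Q$.

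There is no real obstacle here; the only care needed is reading off the adjacencies correctly, in particular that $z_0$ is adjacent to $q_1$ and $q_2$ but not $q_3$. Since Lemma~\ref{lem:minthree} already gives the lower bound of three, this case analysis completes the uniqueness claim for both $G[Q\cup Z]$ and, symmetrically, $G[R\cup U]$.
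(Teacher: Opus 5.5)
Your proof is correct and follows essentially the same route as the paper: a case analysis on which vertex of $Q$ is missing from a size-three cover $S$, with the $R\cup U$ statement obtained by symmetry. The differences are minor. For $q_1\notin S$ you force all three neighbours $z_0,z_{12},z_{13}$ at once, whereas the paper forces $z_{12},z_{13},z_{23}$ and then finds $q_1z_0$ uncovered. You also rule out $q_3\notin S$ directly via the $4$-cycle $z_0q_1z_{12}q_2$, rather than first establishing $q_1,q_2\in S$.
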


\begin{proof}
    In the proof of \Cref{lem:minthree}, it is shown that $H=G[Q\cup Z]$ is bipartite with bipartition $Q\uplus Z$. Hence, it is clear that $Q$ is a vertex cover of $H$ of size three. Suppose that there exists a vertex cover $S$ of $H$ of size three, such that $S\neq Q$. 
    \begin{itemize}
        \item \textbf{Case 1: $q_1\notin S$.} 

        The vertices $z_{12}$ and $z_{13}$ must be in $S$ because $S$ covers the edges $q_1z_{12}$ and $q_1z_{13}$. Now both $q_2$ and $q_3$ cannot simultaneously be in $S$, because size of $S$ is three. Therefore, the vertex $z_{23}$ must be in $S$ to cover the edge $q_2 z_{23}$ if $q_2\notin S$, and to cover the edge $q_3z_{23}$ if $q_3\notin S$. Therefore, the vertex $z_0$ cannot be in $S$ as $S$ has size $3$. Thus, both the endpoints of the edge $q_1z_0$ are not in $S$, which contradicts the fact that $S$ is a vertex cover of $H$. 

        \item \textbf{Case 2: $q_2\notin S$.}

        An argument symmetric to Case 1 shows that Case 2 cannot hold.

        \item \textbf{Case 3: $\{q_1,q_2\}\subset S$}

        Since $S$ is not equal to $Q$, $S$ does not contain $q_3$. Therefore, $S$ must contain both $z_{13}$ and $z_{23}$ to cover the edges $q_3z_{13}$ and $q_3z_{23}$. But this is not possible, as the size of $S$ is exactly three. 
    \end{itemize}

Thus, $Q$ is the only vertex cover of $H$ of size three. Similarly, it can be shown that $R$ is the only vertex cover of $G[R \cup U]$ of size three.
\qed
\end{proof}

We use the elementary structural facts above. Every minimum connected vertex cover contains $A\cup C$ and one private neighbor of each support vertex (\Cref{lem:triangle}); any vertex cover has at least three vertices in each of $Q\cup Z$ and $R\cup U$ (\Cref{lem:minthree}), and the only covers of these induced subgraphs of size three are $Q$ and $R$ (\Cref{lem:sizethree}).

We now use the facts in \Cref{lem:triangle}, \Cref{lem:minthree} and \Cref{lem:sizethree} to determine $cvc(G)$. 

\begin{lemma}\label{lem:cvc}
    The size of the smallest connected vertex cover of the graph $G$ is $19$.
\end{lemma}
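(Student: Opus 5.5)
Lower bound: I will split $V$ into disjoint groups and bound each group separately. The groups are $A\cup C\cup P$, $Q\cup Z$, $R\cup U$ (the only vertices not in these are none, since $|V|=6+12+6+8=32$).

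By \Cref{lem:triangle}, any connected vertex cover contains all six support vertices, and any vertex cover contains at least one private vertex from each of the six private pairs. This gives at least $12$ vertices from $A\cup C\cup P$. By \Cref{lem:minthree}, it contains at least three vertices from $Q\cup Z$ and at least three from $R\cup U$. Since these groups are disjoint, $cvc(G)\geq 12+3+3=18$.

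The work is to show that $18$ is impossible. Suppose $S$ is a connected vertex cover of size $18$. Then every bound above is tight. So $|S\cap(Q\cup Z)|=3$, and by \Cref{lem:sizethree} $S\cap(Q\cup Z)=Q$. Likewise $S\cap (R\cup U)=R$, and $S\cap (A\cup C\cup P)$ consists of $A\cup C$ plus one private vertex per triangle. So $S=A\cup C\cup Q\cup R\cup P'$ with $P'$ one private per support vertex, and we must check whether $G[S]$ is connected.

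Each chosen private vertex hangs off its support vertex. The $Q$ and $R$ vertices are adjacent within $S$ only to $a_i,c_i$, since $Z,U\notin S$. The $A$ and $C$ vertices are not adjacent to each other or among themselves; the edges among them all go through $Z$ or $U$. So the components of $G[S]$ are the three sets $\{a_i,q_i,c_i,r_i\}$ plus privates, for $i=1,2,3$. These are pairwise disconnected, a contradiction, so $cvc(G)\geq 19$. The main care needed here is verifying from the edge list that no $S$-vertex of index $i$ is adjacent to one of index $j\neq i$. Indeed, the only cross-index edges are incident to $Z\cup U$.

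Upper bound: I will exhibit a connected vertex cover of size $19$, namely $S^*=A\cup C\cup Q\cup R\cup P'\cup\{z_0\}$ with any choice of $P'$. Adding $z_0$ does not suffice, because $z_0$ only joins indices $1$ and $2$. So instead I will take $Q\cup R\cup A\cup C\cup P'\cup\{z_{12}\}\cup\ldots$, or rather look for a $19$-set that is still a cover.

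A better candidate replaces $Q$ by a $4$-set of $Q\cup Z$ that covers $G[Q\cup Z]$ and connects the three $a_i$: take $\{q_1,q_2,z_{13},z_{23}\}$. This covers the edges at $q_3$ via $z_{13},z_{23}$, and covers all edges at $z_0,z_{12}$ via $q_1,q_2$. The set $z_{13}$ joins $a_1$ and $a_3$, and $z_{23}$ joins $a_2$ and $a_3$. Each $c_i$ then reaches $a_i$ through $r_i$ (with $R\subseteq S$) for all $i$, and through $q_i$ for $i=1,2$. Together with $R$, $A\cup C$ and $P'$, this gives $4+3+6+6=19$ vertices. Connectivity holds: all $a_i$ are joined via $z_{13},z_{23}$; each $c_i$ is joined via $r_i$; $q_1,q_2$ attach to $a_1,a_2$; and the privates attach to their supports. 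Every edge is covered, checking group by group: the triangles, $G[Q\cup Z]$, $G[R\cup U]$, and all remaining edges, which are incident to $A\cup C$. Hence $cvc(G)=19$.
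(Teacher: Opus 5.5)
Your proof is correct and follows the paper's argument. You use the same $12+3+3$ count from \Cref{lem:triangle}, \Cref{lem:minthree} and \Cref{lem:sizethree}, which forces any $18$-vertex connected cover to be $A\cup C\cup Q\cup R$ plus one private vertex per triangle, and that set falls apart into the index-$i$ pieces. The only difference is your size-$19$ witness: you use $A\cup C\cup R\cup\{q_1,q_2,z_{13},z_{23}\}$ plus privates (a Type B cover), whereas the paper uses $A\cup C\cup Z\cup R$ plus privates (Type A); both are valid, though you should delete the abandoned $z_0$ attempt from the write-up.
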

\begin{proof}
    Using \Cref{lem:triangle}, it is clear that any minimum vertex cover of $G$ must contain all the $A$-vertices, all the $C$-vertices, and one private vertex from each of the six private triangles. Also, by \Cref{lem:minthree}, any minimum vertex cover of $G$ must contain at least three vertices from $Q\cup Z$ and at least three vertices from $R\cup Z$. Therefore, any minimum-sized vertex cover of $G$ must contain at least $18$ vertices (six support vertices, six private vertices, three vertices from $Q \cup Z$ and three vertices from $R \cup U$).
    
    Now, suppose that there exists a minimum-sized connected vertex cover $S$ of $G$ which contains exactly $18$ vertices. Then by \Cref{lem:triangle}, $S$ must contain six support and six private vertices. Now $S$ can only contain six more vertices. Since $S$ must contain at least three vertices from $Q\cup Z$ and at least three vertices from $R \cup Z$ by \Cref{lem:minthree}, $S$ contains exactly three vertices from $Q\cup Z$ and exactly three vertices from $R \cup Z$. Now by \Cref{lem:sizethree}, the only vertex cover of $G[Q \cup Z]$ of size three is $Q$ and the only vertex cover of $G[R \cup U]$ of size three is $R$. Since $S$ must also cover all the edges in $G[Q\cup Z]$ and $G[R\cup U]$, $S$ must contain only the vertices in $Q\cup R$ and no other vertices from $Q\cup R \cup Z\cup U$. Thus, we have shown that $S$ must contain all the $A$-vertices, all the $C$-vertices, six private vertices, all the $Q$-vertices, all the $R$-vertices, and no other vertex. 

    Now we show that $G[S]$ is not connected. Consider the set $S^\prime \subset S$ defined as $S^\prime:= \{a_1,q_1,r_1,c_1\}$. It can be seen that there is no edge from any vertex in $S^\prime$ to any other vertex in $S$ except the private neighbors of $a_1$ and $c_1$ which are in $S$ (say $p_1$ and $p_2$). Therefore, $G[S^\prime \cup\{p_1,p_2\}]$ forms a non-trivial connected component of $G[S]$. Thus, $S$ is not a connected vertex cover of $G$.

    Therefore, the size of any connected vertex cover of $G$ is at least $19$. It can be checked that the union of $A\cup C \cup Z \cup R$ with one private neighbor of each support vertex; is a connected vertex cover of $G$ of size $19$. Thus, we have shown that $cvc(G)=19$.\qed
\end{proof}

\begin{lemma}\label{lem:sizefour}
    The only vertex covers of $G[Q\cup Z]$ of size four are $Q \cup \{z\}$ (where $z\in Z$), $Z$, and $Z^\prime:= \{q_1,q_2,z_{13},z_{23}\}$. Similarly, the only vertex covers of $G[R\cup U]$ of size four are $R \cup \{u\}$ (where $u\in U$), $U$, and $U^\prime:= \{r_1,r_2,u_{13},u_{23}\}$.
\end{lemma}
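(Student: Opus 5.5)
The plan is a direct case analysis on $k:=|S\cap Q|$ for a vertex cover $S$ of $H:=G[Q\cup Z]$ with $|S|=4$. As in \Cref{lem:minthree}, $H$ is bipartite with bipartition $Q\uplus Z$. The key observation is that a subset $S$ is a vertex cover of $H$ exactly when $S\cap Z$ contains $N_H(q)$ for every $q\in Q\setminus S$, since $Z$ is independent in $H$. So I first record the neighborhoods in $H$ (recall $z_0a_1$ is not an edge of $H$):
\[
N_H(q_1)=\{z_0,z_{12},z_{13}\},\quad N_H(q_2)=\{z_0,z_{12},z_{23}\},\quad N_H(q_3)=\{z_{13},z_{23}\}.
\]

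Then I go through $k\in\{0,1,2,3\}$, using $|S\cap Z|=4-k$.

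If $k=3$, then $S=Q\cup\{z\}$ for some $z\in Z$. Every such set is a cover, since $Q$ already is one.

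If $k=0$, then $S=Z$, which is a cover since $Z$ is the other side of the bipartition.

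If $k=1$, then $S\cap Z$ has three vertices and must contain the union of the neighborhoods of the two missing $Q$-vertices. Any two of the three neighborhoods above together already cover all of $Z$ (size four), so no such $S$ exists.

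If $k=2$, let $q$ be the missing $Q$-vertex; then $S\cap Z$ has two vertices and must contain $N_H(q)$. This rules out $q=q_1$ and $q=q_2$, whose neighborhoods have size three. For $q=q_3$ it forces $S\cap Z=\{z_{13},z_{23}\}$, so $S=Z^\prime$. One checks directly that $Z^\prime$ covers every edge of $H$.

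Collecting the cases gives exactly the covers listed in the statement. For $G[R\cup U]$ I will observe that the map $q_i\mapsto r_i$, $z\mapsto u$ (with matching indices) is an isomorphism $G[Q\cup Z]\to G[R\cup U]$. The adjacencies $u_0\sim r_1,r_2$ and $u_{ij}\sim r_i,r_j$ mirror the $Z$-side exactly, so the second statement follows by symmetry.

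There is no real obstacle here; the only care needed is to read off the neighborhoods in $H$ correctly, in particular excluding $a_1$ from $N(z_0)$. The $k=1$ case is the one most likely to be mishandled, and the union argument above settles it.
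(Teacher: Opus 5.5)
Your proof is correct and follows essentially the paper's route. Both arguments are an exhaustive case analysis on which $Q$-vertices lie in $S$, where each missing $q$ forces $N_H(q)\subseteq S$; the paper splits on whether $q_3\in S$ instead of on $|S\cap Q|$, and your explicit isomorphism $q_i\mapsto r_i$, $z_0\mapsto u_0$, $z_{ij}\mapsto u_{ij}$ and your direct check that $Z^\prime$ is a cover fill in details the paper leaves implicit.
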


\begin{proof}
    As seen in the proof of \Cref{lem:minthree}, $H=G[Q\cup Z]$ is bipartite with bipartition $Q\uplus Z$. Since $Q$ is one side of the bipartition, it is clear that $Q\cup \{z\}$ is a vertex cover of $H$ of size four, for any $z\in Z$. Since $Z$ is one side of the bipartition, $Z$ is also a vertex cover of $H$ of size four. Let $S$ be a vertex cover of $G$ of size four which is not of the form $Q\cup \{z\}$ for any $z\in Z$, and also not equal to $Z$. We show that $S$ is exactly equal to $Z^\prime$.
    
    Suppose $q_3\in S$, then at least one of $q_1$ and $q_2$ must lie outside $S$. Suppose $q_1$ lies outside $S$, its neighbors $z_{12},z_{13}$ and $z_0$ must lie inside $S$. Since $S$ is of size four, $S=\{q_3,z_0,z_{12},z_{13}\}$. Therefore, the edge $q_2z_{23}$ is not covered by $S$, which contradicts the fact that $S$ is a vertex cover of $H$. A similar contradiction can be derived when $q_2$ does not belong to $S$. Hence, it can be seen that $q_3\in S$ is not possible and therefore, $q_3\notin S$. 

    Since $q_3\notin S$, the neighbors of $q_3$, i.e., $z_{13}$ and $z_{23}$ must belong to $S$. Since $S\neq Z$, at least one of $z_0$ and $z_{12}$ must be outside $S$. Now in order to cover the edge $q_1z_0$ (if $z_0$ lies outside $S$), or the edge $q_1z_{12}$ (if $z_{12}$ lies outside $S$), the vertex $q_1$ must belong to $S$. A similar argument can be used to show that the vertex $q_2$ must belong to $S$. Since the size of $S$ is four, $S$ is equal to $\{q_1,q_2,z_{13},z_{23}\}$, that is, $S$ must be equal to $Z^\prime$. \qed
\end{proof}

The size-four covers of the two induced subgraphs are classified in \Cref{lem:sizefour}. In particular, $Z^\prime=\{q_1,q_2,z_{13},z_{23}\}$ and $U^\prime=\{r_1,r_2,u_{13},u_{23}\}$.

\begin{lemma}\label{lem:four-types-G}
    There are exactly four types of connected vertex covers of $G$ of size $19$:
    \begin{enumerate}
        \item \textbf{Type A:} The sets formed by $A\cup C\cup R \cup Z$ and exactly one private neighbor of each support vertex. 
        \item \textbf{Type B:} The sets formed by $A\cup C\cup R \cup Z^\prime$ and exactly one private neighbor of each support vertex. 
        \item \textbf{Type C:} The sets formed by $A\cup C\cup Q\cup U$ and exactly one private neighbor of each support vertex. 
        \item \textbf{Type D:} The sets formed by $A\cup C\cup Q\cup U^\prime$ and exactly one private neighbor of each support vertex. 
    \end{enumerate}
Hence, every vertex of $G$ belongs to some minimum-sized connected vertex cover of $G$.
\end{lemma}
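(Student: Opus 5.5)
By \Cref{lem:cvc} and \Cref{lem:triangle}, a connected vertex cover $S$ of size $19$ contains $A\cup C$ and exactly one private neighbor of each support vertex. That accounts for $12$ vertices, leaving exactly $7$ vertices in $Q\cup R\cup Z\cup U$. By \Cref{lem:minthree}, $S$ has at least three vertices in each of $Q\cup Z$ and $R\cup U$, so the split is $(3,4)$ or $(4,3)$. By the symmetry swapping $A\leftrightarrow C$, $Q\leftrightarrow R$ and $Z\leftrightarrow U$, it suffices to treat the case where $S$ has three vertices in $Q\cup Z$ and four in $R\cup U$; the other case yields Types C and D.

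In this case \Cref{lem:sizethree} gives $S\cap(Q\cup Z)=Q$. Then \Cref{lem:sizefour} gives $S\cap(R\cup U)\in\{R\cup\{u\}: u\in U\}\cup\{U,U^\prime\}$. I then check connectivity of $G[S]$ in each subcase. The private vertices only hang off support vertices, so connectivity reduces to the ``skeleton'' $A\cup C\cup Q\cup (S\cap(R\cup U))$. The only neighbors of $q_i$ are $a_i$, $c_i$ and $Z$-vertices, so with $Z\cap S=\emptyset$ the $Q$-vertices merely glue $a_i$ to $c_i$ for each $i$. The three blocks $\{a_i,q_i,c_i\}$ must therefore be joined through $R\cup U$.
\begin{itemize}
\item With $R\cup\{u\}$, the vertices $r_i$ again only join $a_i$ to $c_i$. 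A single $u_{ij}$ joins at most the blocks $i$ and $j$ (via $c_i,c_j$), and $u_0$ joins only block $1$ (via $c_1$; it is adjacent to $r_1,r_2$ as well, so it joins blocks $1$ and $2$). In either case at least one block stays separate, so these sets are disconnected.
\item With $U$, each $u_{ij}$ joins $c_i$ and $c_j$, so $u_{12},u_{13},u_{23}$ connect all three blocks, and $u_0$ is adjacent to $c_1$. Hence this set is connected, which gives Type C.
\item With $U^\prime=\{r_1,r_2,u_{13},u_{23}\}$, the vertex $u_{13}$ joins blocks $1$ and $3$ and $u_{23}$ joins blocks $2$ and $3$, while $r_1,r_2$ attach to $a_1,c_1,a_2,c_2$. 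This set is connected, which gives Type D.
\end{itemize}
Conversely, each of the four types is a vertex cover: the triangles are covered by their support and chosen private vertex, the $A$--$R$, $A$--$Q$ and $C$--$Q$, $C$--$R$ edges are covered by $A\cup C$, and $G[Q\cup Z]$ and $G[R\cup U]$ are covered by the chosen covers. Each type also has size $6+6+3+4=19$, so all four types are minimum connected vertex covers.

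For the final claim, I observe that Types A and C together contain $A\cup C\cup Q\cup R\cup Z\cup U$. Every private vertex appears in some cover of each type, since the private neighbor can be chosen freely. Hence every vertex of $G$ lies in some minimum connected vertex cover.

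The main obstacle is the connectivity check for the $R\cup\{u\}$ subcases, in particular the one with $u_0$. It requires carefully listing all neighbors of each vertex of $R\cup U$ and confirming that no single extra vertex can link all three $a_i$--$c_i$ blocks.
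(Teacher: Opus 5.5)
Your proof is correct and follows essentially the same route as the paper. You fix $A\cup C$ and one private vertex per triangle, split the remaining seven vertices as $(3,4)$ or $(4,3)$, apply \Cref{lem:sizethree} and \Cref{lem:sizefour}, and rule out the $R\cup\{u\}$ (equivalently $Q\cup\{z\}$) options because one extra vertex links at most two of the three $a_i$--$c_i$ blocks. The only differences are that you treat the $(3,4)$ case explicitly and justify the other by the automorphism $A\leftrightarrow C$, $Q\leftrightarrow R$, $Z\leftrightarrow U$, where the paper treats $(4,3)$ and says ``similarly''; your self-correcting sentence about $u_0$ should simply state that it joins blocks $1$ and $2$.
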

\begin{proof}[of \Cref{lem:four-types-G}]
It can be verified that all the subsets of $V$ which are of any of the four types described above, indeed form a vertex cover of $G$. 
    
    As seen in \Cref{lem:triangle}, every connected vertex cover of $G$ must contain the following twelve vertices: all the six support vertices ($A\cup C$), and one private vertex adjacent to each support vertex. Now consider a minimum-sized and connected vertex cover $S$ of $G$. Since $S$ has twelve vertices as described above, it has exactly seven more vertices. Also, by \Cref{lem:minthree}, $S$ must also have at least three vertices from $Q\cup Z$ and at least three vertices from $R\cup U$. 

    Now there are two possiblities: $S$ contains four vertices from $Q\cup Z$ and three vertices from $R\cup U$, or vice versa. First consider the case where $S$ contains four vertices from $Q\cup Z$ and three vertices from $R\cup U$. Since $S$ contains exactly three vertices from $R\cup U$, and $S$ must also cover all the edges in $G[R\cup U]$, by \Cref{lem:sizethree}, $S$ must contain exactly the three vertices in $R$ from $R\cup U$. 

    Since $S$ must contain exactly four vertices from $Q\cup Z$ and $S$ must also cover all the edges of $G[Q\cup Z]$, then $S\cap (Q\cup Z)$ must be either equal to $Q\cup \{z\}$ (for some $z\in Z$), or $Z$, or $Z^\prime$, by \Cref{lem:sizefour}. We show that it is not possible to have $S\cap (Q\cup Z)$ equal to $Q\cup \{z\}$, for any $z\in Z$.

    We show that if $S\cap (Q\cup Z)$ equal to $Q\cup \{z\}$, for some $z\in Z$, then $G[S]$ is not connected. Note that $S$ contains all the support vertices ($A$-vertices and $C$-vertices), one private vertex adjacent to each support vertex, all the vertices in $R$. Note that $S$ does not contain any vertex in $U$. Now consider the following cases:
    \begin{itemize}
        \item \textbf{Case 1:} $S\cap (Q\cup Z)$ equal to $Q\cup \{z_{13}\}$. 
        
        Consider the set $S^\prime$ consisting of the vertices $\{a_2,q_2,r_2,c_2\}$ and the private neighbors of $a_2$ and $c_2$ which are in $S$. The graph $G[S^\prime]$ forms a non-trivial connected component of $S$, as any vertex in $S\setminus S^\prime$ is not adjacent to any vertex in $S$. This is because the vertices outside $S^\prime$ which are adjacent to any vertex in $S^\prime$ are either private neighbors of $a_2$ and $c_2$ (which are not in $S$), or $U$-vertices, or vertices in $Z\setminus \{z_{13}\}$. All of these vertices are not in $S$.  Thus, $G[S]$ is not connected.
         \item \textbf{Case 2:} $S\cap (Q\cup Z)$ equal to $Q\cup \{z_{23}\}$.

         An argument analogous to Case 1 holds in this case for the set $S^\prime$ consisting of the vertices $\{a_1,q_1,r_1,c_1\}$ and the private neighbors of $a_1$ and $c_1$ which are in $S$.
         
         \item \textbf{Case 3:} Neither $z_{13}$ nor $z_{23}$ belongs to $S$.

          An argument analogous to Case 1 holds in this case for the set $S^\prime$ consisting of the vertices $\{a_3,q_3,r_3,c_3\}$ and the private neighbors of $a_3$ and $c_3$ which are in $S$.  
    \end{itemize}
Thus, we have shown that for any minimum-sized connected vertex cover $S$ of $G$, such that $S$ contains four vertices from $Q\cup Z$ and three vertices from $R\cup U$, $S\cap (R\cup U \cup Q\cup Z)= R\cup Z$, or $S\cap (R\cup U \cup Q\cup Z)= R\cup Z^\prime$. This means that $S$ is either of Type $A$ or Type $B$, respectively.

Similarly, it can be shown that for any minimum-sized connected vertex cover $S$ of $G$, such that $S$ contains four vertices from $R \cup U$ and three vertices from $Q\cup Z$, $S\cap (R\cup U \cup Q\cup Z)= Q\cup U$, or $S\cap (R\cup U \cup Q\cup Z)= Q\cup U^\prime$. This means that $S$ is either of Type $C$ or Type $D$, respectively. 

Also, it can be seen that any vertex cover of each of the four types given above is connected. Consider a vertex cover $S$ of Type $A$. It is clear that any two $A$-vertices, say $a_i$ and $a_j$ are adjacent to $z_{ij}$ which is also in $S$. Hence, there exists a path which lies completely in $S$ between any two $A$-vertices. Each vertex in $R$ is adjacent to a vertex in $A$. Each vertex in $C$ is adjacent to a vertex in $R$. Each private vertex is adjacent to a vertex in $A$ or a vertex in $C$. Each vertex in $Z$ is adjacent to a vertex in $A$. Hence, there exists a path between any two vertices of $S$, which lies completely in $S$. Thus, a vertex cover of Type $A$ is connected. A similar proof can be used to show that a vertex cover of Type $C$ is connected.

Now consider a vertex cover $T$ of Type $B$. Recall that $T\cap (R\cup U\cup Q\cup Z)= R\cup Z^\prime$, where $Z^\prime$ is equal to $\{q_1,q_2,z_{13},z_{23}\}$. The vertices $a_1$ and $a_3$ are adjacent to $z_{13}$ which belongs to $T$, and the vertices $a_2$ and $a_3$ are adjacent to $z_{23}$ which belongs to $T$. Therefore, there exists a path which lies completely inside $T$, between any two $A$-vertices. Now, similar to the case above, each vertex in $R$ is adjacent to a vertex in $A$, each vertex in $C$ is adjacent to a vertex in $R$, and each private vertex is adjacent to a vertex in $A\cup C$. The vertices $q_1$ and $q_2$ are adjacent to the vertices $a_1$ and $a_2$, respectively. The vertices $z_{13}$ and $z_{23}$ are adjacent to the vertices $q_1$ and $q_2$, respectively. That is, there exists a path between any two vertices of $T$, which lies completely inside $T$. Thus, a vertex cover of Type $B$ is connected. A similar proof can be used to show that a vertex cover of Type $D$ is connected.

It is clear that each private vertex belongs to a connected vertex cover. Each $A$-vertex and each $C$-vertex belong to every connected vertex cover. Each $R$-vertex belongs to a Type A, or a Type B vertex cover. Each $Q$-vertex belongs to a Type C, or a Type D vertex cover. Each $Z$-vertex belongs to a Type A vertex cover, and each $U$-vertex belongs to a Type C vertex cover. Hence, every vertex of $G$ belongs to some minimum-sized vertex cover of $G$. 
\qed
\end{proof}

We have shown that every vertex of $G$ belongs to some minimum-sized connected vertex cover of $G$. Now we show that $ecvc(G)$ is not equal to $cvc(G)$, in the next two lemmas.  

\begin{lemma}\label{lem:typeconfig}
Consider the \emph{Eternal Connected Vertex Cover} game on $G$ with $19$ guards, if the initial configuration of guards does not form a connected vertex cover of Type A, the attacker can force the defender to arrange the guards in a Type A configuration in a single move (or win the game).
\end{lemma}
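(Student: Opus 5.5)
By \Cref{lem:four-types-G}, with $19$ guards the defender must at every moment occupy a connected vertex cover of one of the four types (otherwise the attacker has already won). So we may assume the initial configuration $S$ is of Type B, C or D, and for each case we exhibit a single edge attack after which every legal response of size $19$ is of Type A. The attacks target vertices of $Z\cup U$ that are unoccupied. After any response the new configuration $S'$ is again of one of the four types, and it contains the attacked endpoint that was empty. It also contains $S'\subseteq N[S]$, since each guard moves at most one step.

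\textbf{Types C and D.} Here $S\cap(Q\cup Z)=Q$, so no $Z$-vertex is occupied. I will attack $z_0q_1$ (any $q$--$z$ edge would do). The guard on $q_1$ must move to $z_0$, so $z_0\in S'$. Among the four types, only Type A contains $z_0$: Type B contains $Z'=\{q_1,q_2,z_{13},z_{23}\}$, and Types C and D contain no $Z$-vertex. Hence $S'$ is of Type A.

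\textbf{Type B.} Here $S\cap(Q\cup Z)=Z'$, so $z_0$ and $z_{12}$ are empty. I will attack an edge at $z_0$ or at $z_{12}$, for instance $z_0a_1$ or $z_{12}q_1$. Then $S'$ contains $z_0$ (respectively $z_{12}$), which again forces Type A, because only Type A contains $z_0$ or $z_{12}$. Alternatively, an attack on an edge from $R$ to $U$ forces a $U$-vertex into $S'$ and hence Type C or D; that is not what we want, so the attack must be chosen inside $Q\cup Z\cup A$.

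The only subtle point is whether the defender can respond at all. If no legal response exists, the attacker wins, which is the ``or win the game'' clause. So the case analysis only needs the type-membership observation. The main step I would check carefully is the list of which $Z$/$U$-vertices appear in each type. It shows that $z_0$ (and $z_{12}$) belong exclusively to Type A, which makes the forcing immediate.
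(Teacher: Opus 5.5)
Your proof is correct and is essentially the paper's: attack an edge at an unoccupied vertex that lies only in Type A configurations ($z_0$, or $z_{12}$ for Type B), so any legal response with $19$ guards must be Type A. The paper simply attacks an edge at $z_0$ uniformly for Types B, C and D. One slip: your parenthetical ``any $q$--$z$ edge would do'' is false, because attacks such as $q_3z_{13}$ or $q_2z_{23}$ place a guard on $z_{13}$ or $z_{23}$, and both of these also belong to every Type B configuration, so only edges at $z_0$ or $z_{12}$ force Type A.
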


\begin{proof}
    If the initial configuration of the guards is not a connected vertex cover of $G$, then the attacker wins trivially. Assume that the initial configuration of guards is a connected vertex cover of $G$, that is not Type A. The attacker attacks an edge adjacent to $z_0$. Since $z_0$ does not belong to any vertex cover of Type B, C or D, initially there is no guard on $z_0$. Hence, after the attack, the guard on the other end-point of the attacked edge is forced to move to $z_0$, and cannot move any further before the next attack. If the next configuration of the guards is not a connected vertex cover, the attacker wins. If the next configuration formed by the guards is a connected vertex cover of the graph $G$, it must be a connected vertex cover of Type A, as these are the only vertex covers of $G$ which contain the vertex $z_0$.
    \qed
\end{proof}

\begin{lemma}\label{lem:attack}
    If the guards form a Type A configuration on $G$, the attacker can win the game in the next two moves.
\end{lemma}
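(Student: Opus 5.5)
The plan is to exhibit an explicit two-attack strategy from a Type A configuration $S=A\cup C\cup R\cup Z\cup P_S$, where $P_S$ is one private vertex from each triangle. The strategy rests on two observations. First, after every response the occupied set must be one of the four types of \Cref{lem:four-types-G}. Second, guards move only along edges, so the new configuration must be reachable from the old one by a system of vertex-disjoint moves: each guard stays or crosses one edge, and at least one guard crosses the attacked edge.

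\textbf{First attack.} The attacker attacks the edge $a_3q_3$. Since $q_3\notin S$, the guard on $a_3$ is forced onto $q_3$. The resulting configuration contains $q_3$, and $q_3$ lies in no Type A or Type B cover, so it must be of Type C or Type D. In particular, $a_3$ must be refilled in the same move. A private neighbor cannot refill it: the other private vertex of that triangle is unguarded, and the triangle edge would become uncovered. So $a_3$ is refilled from $z_{13}$, $z_{23}$ or $r_3$.

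\textbf{Enumerating feasible responses.} Next I enumerate which Type C or Type D targets are reachable. The key facts are these:
\begin{itemize}
\item The four guards on $Z$ can only move to $A$ or $Q$.
\item The three guards on $R$ can only move to $A$, $C$ or $U$.
\item Every support vertex must end up occupied, and the private vertices cannot usefully participate, by the triangle argument above.
\item A Type C target needs all four $U$-vertices occupied, including $u_0$, whose neighbors are $r_1,r_2,c_1$.
\item A Type D target needs $r_1,r_2,u_{13},u_{23}$ occupied.
\end{itemize}
Counting guard flow between the blocks $Z\to Q$, $R\to U$ and $A\cup C$, I expect to show the following. Type C is impossible in one move: $U$ needs four guards, but only the three $R$-guards together with a $C$-guard that is then irreplaceable can reach it. Type D is reachable only in a very restricted way, essentially determining which guards sit on $r_1,r_2$ and on the $Q$-vertices. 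If neither type is reachable, the attacker has already won after one move.

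\textbf{Second attack.} Against the forced Type D configuration $A\cup C\cup Q\cup U'\cup P'$, the attacker attacks an edge incident to a vertex lying only in covers that are unreachable from the current configuration. The natural candidates are an edge at $u_0$, which forces a Type C cover, or an edge at $z_0$, which forces Type A. I would redo the same reachability count for the forced target type and show that no legal simultaneous move exists. The attacker can then pick whichever attack is unreachable.

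\textbf{Main obstacle.} The hard step is the reachability bookkeeping: proving that a given target type cannot be obtained by a single simultaneous guard move. I would handle it by a matching or Hall-type argument. For each block, say $U$ or $R$, compare the number of required guards with the number of guards adjacent to that block, using that the $A\cup C$ and private vertices are pinned. If $a_3q_3$ turns out not to be the best first attack, I would instead try $q_3z_{13}$, which removes a $Z$-guard, and run the same count.
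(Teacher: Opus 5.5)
There is a genuine gap: your first attack lets the defender escape to a configuration that survives every single attack. After the attack on $a_3q_3$, the defender can reach a Type D cover by $a_3\to q_3$, $z_{13}\to a_3$, $z_{23}\to a_2$, $a_2\to r_2$, $r_2\to u_{23}$, $r_3\to u_{13}$, $z_0\to q_1$, $z_{12}\to q_2$, with all other guards fixed. Your fallback attack on $q_3z_{13}$ is answered the same way, with $z_{13}\to q_3$ replacing the first two moves.

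From a Type D cover, however, no attack wins.
\begin{itemize}
\item An attack at $u_0$ or $u_{12}$ is met by sending the guards on $r_1,r_2$ to $u_0,u_{12}$, giving Type C. For $c_1u_0$ use $c_1\to u_0$, $r_1\to c_1$, $r_2\to u_{12}$.
\item An attack at $z_0$ leads to Type A. For example, $q_1z_0$ is met by $q_1\to z_0$, $q_2\to z_{12}$, $q_3\to z_{13}$, $a_2\to z_{23}$, $r_2\to a_2$, $u_{23}\to r_2$, $u_{13}\to r_3$. The other edges at $z_0$ and $z_{12}$ are handled analogously.
\item Attacks at $z_{13}$, $z_{23}$ or $r_3$ are answered by similar chains into Type B. For example, $q_3z_{13}$ is met by $q_3\to z_{13}$, $a_2\to z_{23}$, $r_2\to a_2$, $u_{23}\to r_2$, $u_{13}\to r_3$.
\end{itemize}
So the reachability count you plan for the second attack cannot succeed, whichever edge you choose. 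Along your route the attacker needs a third attack: force Type A via $z_0$, then attack $u_0$.

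The missing step is one you already state yourself: from Type A, a Type C cover is unreachable in one move, whatever the attack. So attack an edge at $u_0$, e.g.\ $c_1u_0$, immediately. Since $u_0$ lies only in Type C covers, any legal response must produce Type C. Put $X=C\cup U\cup P_C$, where $P_C$ is the set of private neighbours of the $C$-vertices. Every neighbour of $X$ outside $X$ lies in $Q\cup R$. Type A has $6$ guards in $X$, $3$ on $R$ and none on $Q$, so at most $9$ guards can be in $X$ after one round. Type C needs $10$ there. This is the paper's proof; it is exactly your Hall-type count applied to the right attack, and it wins with a single attack.
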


    \begin{proof}
Suppose that the guards form a Type A configuration and attack the edge $c_1u_0$.
Since $u_0$ is unoccupied, any legal response must place a guard
on $u_0$. By \Cref{lem:four-types-G}, the resulting connected
vertex cover of size $19$, if one exists, must be of Type C.

Let $P_C$ be the six private neighbors of the vertices in $C$,
and let $X=C\cup U\cup P_C$. Every neighbor of $X$ outside $X$
belongs to $Q\cup R$. Initially, a Type A configuration has six
guards in $X$ (three on $C$ and one private neighbor of each
vertex in $C$), three guards in $R$, and none in $Q$. Hence at
most nine guards can occupy $X$ after one round.

But a Type C configuration has ten guards in $X$: three on $C$,
four on $U$, and one private neighbor of each vertex in $C$.
It therefore cannot be reached in one round. The attack on
$c_1u_0$ has no legal response.
\qed\end{proof}

It can be seen from \Cref{lem:cvc} and \Cref{lem:attack}, that for any initial configuration on $G$ with $19$ guards, the attacker can force the guards to move to a Type A configuration (if they already do not form a Type A configuration), or the attacker wins. Once the guards form a Type A configuration, the attacker attacks the edge $c_1u_0$ and wins. Hence, in any configuration on $G$ with $19$ guards, the defender cannot win in the \emph{Eternal Connected Vertex Cover} game. Thus, the eternal connected vertex cover number of $G$ ($ecvc(G)$) is not equal to the size of the minimum connected vertex cover of $G$ ($cvc(G)$). From \Cref{lem:four-types-G}, it is clear that every vertex of $G$ belongs to some minimum-sized vertex cover of $G$. Hence, we have the main result of this section.

\begin{theorem}\label{counterexamplethm}
    There exists a graph $G$ such that every vertex of $G$ belongs to a minimum-sized connected vertex cover of $G$, but $ecvc(G)\neq cvc(G)$.
\end{theorem}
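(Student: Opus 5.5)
The graph $G$ constructed in this section serves as the witness, and the theorem follows by assembling the preceding lemmas. The argument has two halves: the vertex-membership property and the strict inequality $ecvc(G)>cvc(G)$.

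\textbf{Membership.} By \Cref{lem:cvc}, $cvc(G)=19$. By \Cref{lem:four-types-G}, the minimum connected vertex covers are exactly the sets of Types A--D, and their union covers every vertex:
\begin{itemize}
\item support vertices lie in all of them;
\item each private vertex can be chosen as the private representative of its triangle;
\item $R$- and $Z$-vertices lie in Type A covers;
\item $Q$- and $U$-vertices lie in Type C covers.
\end{itemize}

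\textbf{Strict inequality.} By the sandwich lemma of Fujito and Nakamura, it suffices to show that $19$ guards lose from every starting configuration. The attacker's strategy has two phases.

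First, by \Cref{lem:typeconfig}, an attack on an edge at $z_0$ either wins immediately or forces a Type A configuration. This works because $z_0$ is in no cover of Type B, C or D: it is unoccupied, and afterwards it must be occupied.

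Second, once the guards are in Type A, \Cref{lem:attack} applies. An attack on $c_1u_0$ forces a guard onto $u_0$, so the only admissible resulting configurations are of Type C. A Type C configuration needs ten guards in $X=C\cup U\cup P_C$. However, each guard moves at most one edge per round, and the only neighbours of $X$ outside $X$ lie in $Q\cup R$. So at most $6+3=9$ guards can be in $X$ after one round, because Type A has no $Q$-guards. The defender therefore has no legal response.

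Combining the two phases, the attacker wins against any configuration of $19$ guards. Hence $ecvc(G)=20\neq 19=cvc(G)$.

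The main point needing care is the counting in \Cref{lem:attack}, specifically the exact guard counts per region in Types A and C. This also requires the closed neighbourhood of $X$ to be confined to $X\cup Q\cup R$. To confirm this, check the adjacencies of $c_i$, $u_{ij}$, $u_0$ and the private vertices against the construction: $C$-vertices see $Q$ and $R$, while $U$-vertices see only $C$ and $R$.
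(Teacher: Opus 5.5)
Your proposal is correct and follows the paper's argument essentially step for step. You get membership from \Cref{lem:four-types-G}, force a Type A configuration by attacking an edge at $z_0$ (\Cref{lem:typeconfig}), and then defeat Type A with the attack on $c_1u_0$ via the count $6+3<10$ on $X=C\cup U\cup P_C$ (\Cref{lem:attack}), using $N(X)\setminus X\subseteq Q\cup R$ exactly as the paper does.
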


\section{Modification of the counterexample}\label{sec:modified-example}

In the last section, we saw that there is a graph $G$ such that every vertex of $G$ belongs to some minimum-sized connected vertex cover of $G$, but $ecvc(G)\neq cvc(G)$. This is because every configuration with $cvc(G)$ many guards can be defeated by the attacker in a finite sequence of attacks. But the natural question also arises that whenever $ecvc(G)=cvc(G)$, does this mean that every minimum-sized vertex cover is a winning configuration?
Here, we answer this question also in the negative.

Consider the graph $G^\prime$ such that $V(G^\prime)=V(G)$, and $E(G^\prime)=E(G)\setminus \{q_1z_{13}\}$. We show that $ecvc(G^\prime)=cvc(G^\prime)$, but not all the minimum-sized vertex covers of $G^\prime$ are winning configurations for the defender on $G^\prime$ in the \emph{Eternal Connected Vertex Cover} game. First, we show that $ecvc(G^\prime)=cvc(G^\prime)$, and for this we first understand the structure of the minium-sized connected vertex covers of $G^\prime$.

\begin{lemma}\label{lem:cvcnew}
    The size of the smallest connected vertex cover of the graph $G^\prime$ is $19$.
\end{lemma}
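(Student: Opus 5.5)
The plan follows the proof of \Cref{lem:cvc}, checking each ingredient for $G^\prime$. The lower bound of $18$ carries over directly. \Cref{lem:triangle} holds verbatim in $G^\prime$: the private triangles and their attachment to $A\cup C$ are unchanged, so every connected vertex cover contains $A\cup C$, and every minimum one contains exactly one private vertex per triangle. In $G^\prime[Q\cup Z]$ the set $\{q_1z_{12},q_2z_{23},q_3z_{13}\}$ is still a matching, because the deleted edge $q_1z_{13}$ is not in it. Hence \Cref{lem:minthree} still gives at least three vertices from $Q\cup Z$ and, since $R\cup U$ is untouched, at least three from $R\cup U$. So every connected vertex cover of $G^\prime$ has at least $18$ vertices.

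To rule out $18$, we must re-determine the size-three vertex covers of $H^\prime:=G^\prime[Q\cup Z]$, since deleting an edge can only create new ones. We redo the case analysis of \Cref{lem:sizethree}.
\begin{itemize}
\item If $q_1\notin S$, then $z_{12},z_0\in S$ (the neighbours of $q_1$ are now only $z_{12},z_0$). The edges $q_3z_{13}$ and $q_3z_{23}$ force $q_3\in S$. The third vertex must then cover $q_2z_{23}$, so $S=\{z_0,z_{12},q_3\}$ and $z_{23}$ or $q_2$ is left uncovered unless we pick correctly. A quick check shows $\{z_0,z_{12},q_3\}$ fails, because $q_2z_{23}$ is uncovered.
\item If $q_2\notin S$, then $z_0,z_{12},z_{23}\in S$, and the edge $q_3z_{13}$ is uncovered.
\item If $q_1,q_2\in S$ and $q_3\notin S$, then $z_{13},z_{23}\in S$, giving four vertices, which is too many.
\end{itemize}
Thus $Q$ remains the unique size-three cover of $H^\prime$, and $R$ the unique one of $G^\prime[R\cup U]$. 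An $18$-vertex connected vertex cover would therefore be exactly $A\cup C\cup Q\cup R$ plus six private vertices. It is disconnected, by the same component $\{a_1,q_1,r_1,c_1\}$ together with its two private vertices: no edge was added, so no new adjacencies arise. Hence $cvc(G^\prime)\ge 19$.

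For the upper bound we exhibit a connected vertex cover of size $19$. The Type A set $A\cup C\cup Z\cup R$ plus one private neighbour per support vertex is still a vertex cover, since it is one in $G\supseteq G^\prime$. It is still connected, because its connectivity argument used only the edges $z_{ij}a_i$, $z_0a_1$, $R$--$A$, $R$--$C$ and the private edges, none of which is $q_1z_{13}$. This gives $cvc(G^\prime)=19$.

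The only step needing real care is the re-verification of the size-three covers of $H^\prime$, since edge deletion could in principle enlarge the family of small covers. The case analysis above is where I would concentrate, carefully listing the neighbourhoods of $q_1,q_2,q_3$ in $G^\prime$:
\[N(q_1)\cap Z=\{z_0,z_{12}\},\qquad N(q_2)\cap Z=\{z_0,z_{12},z_{23}\},\qquad N(q_3)\cap Z=\{z_{13},z_{23}\}.\]
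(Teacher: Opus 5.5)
Your proof is correct and follows essentially the same route as the paper. Both arguments get the lower bound of $18$ from \Cref{lem:triangle} together with the surviving matchings, re-derive that $Q$ is the unique size-three cover of $G^\prime[Q\cup Z]$, inherit disconnectedness from $G$ because $E(G^\prime)\subsetneq E(G)$, and use the Type A set as the size-$19$ witness. Your first case is worded confusingly, but the logic holds: once $z_0,z_{12}\in S$ only one slot remains, covering both $q_3z_{13}$ and $q_3z_{23}$ forces $q_3$, and then $q_2z_{23}$ is left uncovered.
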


\begin{proof}
    Let $S$ be a minimum-sized connected vertex cover of $G$. It is clear that \Cref{lem:triangle} holds for the graph $G^\prime$ as well, as the vertices in $A\cup C$ are cut-vertices of $G^\prime$ as well, and must be present in every connected vertex cover. Also, one private neighbor of each support vertex must be present, to cover the edge between the two private vertices, in each private triangle. Thus, $S$ must contain exactly twelve vertices from the union of $A,C$ and the set of private vertices. Also the graph $G^\prime[R\cup U]$ has the matching $\{r_1u_{12},r_2u_{23},r_3u_{13}\}$, and the graph $G^\prime[Q\cup Z]$ has the matching $\{q_1z_{12},q_2z_{23},q_3z_{13}\}$. Therefore, $S$ must contain  at least three vertices from $R\cup U$ and at least three vertices from $Q\cup Z$. From \Cref{lem:sizethree}, the only vertex cover of $G^\prime[R\cup U]$ of size three is $R$, as the edges of $R\cup U$ are preserved in $G^\prime$. 

    \begin{claim}
    The only vertex cover of $G^\prime[Q\cup Z]$ of size three is $Q$.
    \end{claim}
    \begin{proof}
    It is clear that the graph $H^\prime:=G^\prime[Q\cup Z]$ is bipartite with the bipartition $Q\uplus Z$. Let $S$ be a vertex cover of size three of $H^\prime$ such that $S\neq Q$. Now suppose that $q_1\notin S$. Since $S$ is a vertex cover of $H^\prime$, the neighbors of $q_1$, i.e., $z_0,z_{12}$ must be in $S$. Now $\{q_2z_{23},q_3z_{13}\}$ is matching of size two, hence $S$ must contain at least two more vertices from $\{q_2,z_{12},q_3,z_{13}\}$. But this is not possible, as the size of $S$ is three. Thus, $q_1$ must belong to $S$, and since $S\neq Q$, either $q_2$ or $q_3$ does not belong to $S$.

    Suppose $q_2\notin S$, the neighbors of $q_2$, i.e., $z_0,z_{12}$ must be in $S$. Since $S$ already contains $q_1$, $S$ must be equal to $\{q_1,z_0,z_{12}\}$, but then both the endpoints of the edge $q_2z_{23}$ are outside $S$, contradicting the fact that $S$ is a vertex cover of $H^\prime$.

    Therefore, $q_1,q_2$ belong to $S$, and $q_3$ does not belong to $S$. Since the size of $S$ is three, exactly one vertex of $Z$ must belong to $S$. Therefore, at least one of $z_{13}$ and $z_{23}$ does not belong to $S$, and hence, at least one of the edges $q_3z_{13}$ and $q_3z_{23}$ has both its endpoints outside $S$. This again contradicts the fact that $S$ is a vertex cover of $H^\prime$.

    Hence, the only vertex cover of $H^\prime$ of size three is $Q$. \qed
    \end{proof}
We have shown above that any minimum-sized vertex cover of $G^\prime$ must contain twelve vertices from the union of $A,C$ and the set of private vertices, at least three vertices from $R\cup U$, and at least three vertices from $Q\cup Z$. Consider a minimum-sized connected vertex cover of $Z$ which has size exactly $18$. Therefore, $S$ must contain twelve vertices from the union of $A,C$ and the set of private vertices, all the vertices in $Q$, and all the vertices in $R$ (and no other vertices). But now it can be seen that $G[S^\prime]$ is not connected. It was shown in the proof of \Cref{lem:cvc} that $G[S]$ is not connected, and $V(G^\prime)=V(G)$, $E(G^\prime)\subsetneq E(G)$, which implies that $G^\prime[S]$ is not connected. 

Thus, any minimum vertex cover of $G^\prime$ must have size at least $19$. It can be seen that the union of $A\cup C \cup Z \cup R$ with one private neighbor of each support vertex; is a connected vertex cover of $G$ of size $19$. Thus, we have shown that $cvc(G^\prime)=19$.\qed
\end{proof}

Next, we understand the structure of all the connected vertex covers of $G^\prime$ of size $19$.

\begin{lemma}\label{lem:sizefournew}
    The only vertex covers of $G^\prime[Q\cup Z]$ of size four are $Q \cup \{z\}$ (where $z\in Z$), $Z$, $Z^\prime:= \{q_1,q_2,z_{13},z_{23}\}$, $Z_1=\{z_0,z_{12},z_{23},q_3\}$ and $Z_2=\{z_0,z_{12},q_2,q_3\}$.
\end{lemma}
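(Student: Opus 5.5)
The plan is to classify vertex covers $S$ of $H^\prime:=G^\prime[Q\cup Z]$ with $|S|=4$ by their trace $T:=S\cap Q$, as in the proof of \Cref{lem:sizefour}, but now by exhausting all eight choices of $T$. First I record the neighbourhoods in $H^\prime$. Since $H^\prime$ is bipartite with sides $Q$ and $Z$, and the edge $q_1z_{13}$ has been removed, we have
\[
N(q_1)=\{z_0,z_{12}\},\qquad N(q_2)=\{z_0,z_{12},z_{23}\},\qquad N(q_3)=\{z_{13},z_{23}\}.
\]

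Because $H^\prime$ is bipartite, a set $S$ is a vertex cover of $H^\prime$ exactly when $S\cap Z\supseteq N(Q\setminus T)$. So, for a fixed $T$, the size-four covers with trace $T$ are the sets $T\cup W$ with $N(Q\setminus T)\subseteq W\subseteq Z$ and $|W|=4-|T|$. Such a cover exists only if $|T|+|N(Q\setminus T)|\le 4$. I will go through the eight subsets $T\subseteq Q$ and compute $|T|+|N(Q\setminus T)|$ for each.
\begin{itemize}
\item $T=Q$: the requirement is empty, and we must add one vertex of $Z$. This gives $Q\cup\{z\}$ for $z\in Z$.
\item $T=\{q_1,q_2\}$: $N(q_3)$ has size $2$, so the total is $4$ and $S=Z^\prime$.
\item $T=\{q_2,q_3\}$: $N(q_1)=\{z_0,z_{12}\}$, so the total is $4$ and $S=Z_2$.
\item $T=\{q_3\}$: $N(\{q_1,q_2\})=\{z_0,z_{12},z_{23}\}$, so the total is $4$ and $S=Z_1$.
\item $T=\emptyset$: $N(Q)=Z$, which gives $S=Z$.
\item $T=\{q_1,q_3\}$: $N(q_2)$ has size $3$, so the total is $5$. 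No cover.
\item $T=\{q_1\}$: $N(\{q_2,q_3\})=Z$, so the total is $5$. No cover.
\item $T=\{q_2\}$: $N(\{q_1,q_3\})=Z$, so the total is $5$. No cover.
\end{itemize}
Conversely, each listed set contains $N(Q\setminus T)$ for its own trace $T$, so each is indeed a vertex cover of $H^\prime$ of size four.

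The only delicate point is bookkeeping. Padding must be done only with $Z$-vertices, since $T$ is fixed as $S\cap Q$; this keeps the cases disjoint and exhaustive. I also need to get the neighbourhoods right after deleting $q_1z_{13}$: this deletion is exactly what opens up the new cases $T=\{q_3\}$ and $T=\{q_2,q_3\}$, which yield $Z_1$ and $Z_2$. No earlier lemma is needed beyond the bipartite structure of $H^\prime$ noted in the proof of \Cref{lem:cvcnew}.
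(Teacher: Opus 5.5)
Your proof is correct and is essentially the paper's argument: both use the fact that every $Q$-vertex missing from $S$ forces its entire $Z$-neighbourhood into $S$, and your neighbourhoods in $H^\prime$ after deleting $q_1z_{13}$ are right. The difference is only in organization. You enumerate all eight traces $T=S\cap Q$ with the test $|T|+|N(Q\setminus T)|\le 4$, which makes the exclusion of $\{q_1\}$, $\{q_2\}$, $\{q_1,q_3\}$ explicit. The paper instead splits on whether $q_3$, then $q_2$, then $q_1$ is missing, and uses the $K_{2,2}$ on $\{q_1,q_2,z_0,z_{12}\}$ in the first branch.
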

\begin{proof}
    Since $H^\prime=G^\prime[Q\cup Z]$ is a bipartite graph with bipartition $(Q\uplus Z)$, it is clear that $Q\cup\{z\}$ is a vertex cover of $H^\prime$ of size four. Now, let $S$ be a vertex cover of $H^\prime$ such that $Q\not\subset S$. Therefore, there exists $q\in Q$ such that $q\notin S$.
    
    Suppose $q_3\notin S$, then the neighbors of $q_3$, that is $z_{13}$ and $z_{23}$ must be in $S$. Recall that the size of $S$ is four, and hence $S$ can contain only two more vertices. The graph $H^\prime[\{q_1,q_2,z_0,z_{12}\}]$ is the graph $K_{2,2}$ and hence its only two vertex covers of size two are $\{q_1,q_2\}$ and $\{z_0,z_{12}\}$. It can indeed be verified that $Z^\prime=\{z_{23},z_{13},q_1,q_2\}$ is a vertex cover of $H^\prime$ of size four. Also, as $H^\prime$ is bipartite, with bipartition $Q\uplus Z$, $Z$ is clearly a vertex cover of $H^\prime$ of size four. Therefore, the only vertex covers of $H^\prime$ of size four which do not contain $q_3$ are $Z$ and $Z^\prime$. 

    Now consider the case where $q_3\in S$ and $q_2\notin S$. Since, $q_2\notin S$, its neighbors $z_0,z_{12},z_{23}$ must be in $S$. Since $S$ is of size four, $S=Z_1=\{z_0,z_{12},z_{23},q_3\}$. It can be verified that $Z_1$ is indeed a vertex cover of $H^\prime$.

    Finally, consider the case where $q_3\in S$ and $q_2\in S$. Since $Q\not\subset S$, the vertex $q_1$ does not belong to $S$. Therefore, the vertices $z_0$ and $z_{12}$ must belong to $S$ because $z_0$ and $z_{12}$ are the neighbors of $q_1$ which is not in $S$. Since the size of $S$ is four, $S=\{q_2,q_3,z_0,z_{12}\}=Z_2$. It can be verified that $Z_2$ is indeed a vertex cover of $H^\prime$. \qed
\end{proof}

\begin{lemma}\label{lem:typeconfignew}
    There are exactly five types of connected vertex covers of $G^\prime$ of size $19$:
    \begin{enumerate}
        \item \textbf{Type A:} The sets formed by $A\cup C\cup R \cup Z$ and exactly one private neighbor of each support vertex. 
        \item \textbf{Type B:} The sets formed by $A\cup C\cup R \cup Z^\prime$ and exactly one private neighbor of each support vertex. 
        \item \textbf{Type C:} The sets formed by $A\cup C\cup Q\cup U$ and exactly one private neighbor of each support vertex. 
        \item \textbf{Type D:} The sets formed by $A\cup C\cup Q\cup U^\prime$ and exactly one private neighbor of each support vertex. 
        \item \textbf{Type E:} The sets formed by $A\cup C\cup R \cup Z_1$ and exactly one private neighbor of each support vertex. 
    \end{enumerate}
Hence, every vertex of $G$ belongs to some minimum-sized connected vertex cover of $G$.
\end{lemma}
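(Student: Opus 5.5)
The plan is to follow the proof of \Cref{lem:four-types-G} almost verbatim, replacing \Cref{lem:sizefour} by \Cref{lem:sizefournew} on the $Q\cup Z$ side. For the connectivity checks we use a monotonicity principle. Since $E(G^\prime)\subsetneq E(G)$, any set $S$ for which $G[S]$ is disconnected also has $G^\prime[S]$ disconnected. Conversely, a connectivity argument for $G[S]$ carries over to $G^\prime[S]$ whenever it does not use the edge $q_1z_{13}$.

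First, as in \Cref{lem:cvcnew}, any connected vertex cover $S$ of $G^\prime$ of size $19$ contains $A\cup C$ and exactly one private neighbour of each support vertex. That leaves seven further vertices. Both $G^\prime[Q\cup Z]$ and $G^\prime[R\cup U]$ contain matchings of size three, so $S$ has either four vertices in $Q\cup Z$ and three in $R\cup U$, or the reverse.

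\textbf{Case (4,3).} By \Cref{lem:sizethree}, the three vertices in $R\cup U$ are exactly $R$. By \Cref{lem:sizefournew}, $S\cap(Q\cup Z)$ is one of $Q\cup\{z\}$, $Z$, $Z^\prime$, $Z_1$, $Z_2$.
\begin{itemize}
\item The sets with $S\cap(Q\cup Z)=Q\cup\{z\}$ are disconnected already in $G$, by Cases 1--3 of \Cref{lem:four-types-G}, hence also in $G^\prime$.
\item For $Z_2=\{z_0,z_{12},q_2,q_3\}$, we show that $\{a_3,q_3,r_3,c_3\}$ together with the chosen private neighbours of $a_3$ and $c_3$ is a separate component. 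The neighbours of $a_3$ outside its triangle are $q_3,r_3,z_{13},z_{23}$. Those of $q_3$ are $a_3,c_3,z_{13},z_{23}$. Those of $c_3$ and $r_3$ lie in $\{a_3,c_3,q_3,r_3\}\cup U$. None of $z_{13},z_{23}$ or the $U$-vertices is in $S$, so this component is cut off.
\item Types A and B are connected by the same paths as in \Cref{lem:four-types-G}, since neither argument uses $q_1z_{13}$.
\item For Type E ($Z_1=\{z_0,z_{12},z_{23},q_3\}$): $z_{12}$ joins $a_1$ and $a_2$, and $z_{23}$ joins $a_2$ and $a_3$. $R$ hangs off $A$, $C$ hangs off $R$, and the private vertices hang off $A\cup C$. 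Finally $z_0$ is adjacent to $a_1$ and $q_3$ is adjacent to $a_3$. So Type E is connected.
\end{itemize}

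\textbf{Case (3,4).} Here $S\cap(Q\cup Z)=Q$, by the Claim in \Cref{lem:cvcnew}. The graph $G^\prime[R\cup U]=G[R\cup U]$ is unchanged, so \Cref{lem:sizefour} gives the options $R\cup\{u\}$, $U$, $U^\prime$.
\begin{itemize}
\item The options $R\cup\{u\}$ are disconnected by the argument symmetric to Cases 1--3, which already works in $G$ and hence in $G^\prime$.
\item Types C and D are connected by the paths used in \Cref{lem:four-types-G}. These only involve edges among $A,C,Q,U,R$ and the private vertices, never $q_1z_{13}$.
\end{itemize}
Conversely, each of the five listed types is a vertex cover, which we check directly from \Cref{lem:sizethree}, \Cref{lem:sizefour} and \Cref{lem:sizefournew}. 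This gives exactly the five types.

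The membership claim follows as before. $Z$-vertices lie in Type A, $U$-vertices in Type C, $R$-vertices in Type A, $Q$-vertices in Type C, and the private and support vertices in every type.

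The main obstacle is the connectivity bookkeeping for the two new candidates $Z_1$ and $Z_2$. One must identify the isolated block $\{a_3,q_3,r_3,c_3\}$ for $Z_2$ and an explicit spanning path system for $Z_1$. The rest transfers from $G$ by edge-monotonicity.
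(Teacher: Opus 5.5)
Your proposal is correct and follows essentially the same route as the paper. It uses the $(4,3)/(3,4)$ split and the size-four classifications of \Cref{lem:sizefour} and \Cref{lem:sizefournew}, inherits disconnectedness of the $Q\cup\{z\}$ and $R\cup\{u\}$ cases from $G$, and excludes $Z_2$ by isolating the block around $a_3$. Your block correctly includes $r_3$, which the paper's version omits, and your explicit connectivity checks for the five types fill in what the paper leaves as ``it can be verified.''

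One small inaccuracy: the Type B connectivity argument in \Cref{lem:four-types-G} does invoke the adjacency $z_{13}q_1$. It does so only redundantly, because $z_{13}$ remains adjacent to $a_1$ and $a_3$ in $G^\prime$, so your conclusion is unaffected.
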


\begin{proof}
    As seen in the proof of \Cref{lem:cvcnew}, every minimum-sized vertex cover of $G^\prime$ must contain all the vertices in $A\cup C$, and one private vertex from each private triangle. Also, every minimum-sized vertex cover of $G^\prime$ must contain at least three vertices from $R\cup U$ and at least three vertices from $Q\cup Z$. Let $S$ be a minimum-sized connected vertex cover of $G^\prime$, the size of $S$ is $19$ as seen in \Cref{lem:cvcnew}. Therefore, $S$ must contain all the vertices in $A\cup C$, one private vertex from each private triangle, three vertices from $R\cup U$ and four vertices from $Q\cup Z$; or all the vertices in $A\cup C$, one private vertex from each private triangle, four vertices from $R\cup U$ and three vertices from $Q\cup Z$.

    Recall from the proof of \Cref{lem:sizefournew} that the only vertex covers of size four of $G^\prime[R\cup U]$ are $R\cup\{u\}$ (for any $u\in U$), $U$ and $U^\prime$; and only vertex covers of size four of $G^\prime[Q\cup Z]$ are $Q\cup\{z\}$ (for any $z\in Z$), $Z,Z^\prime$, $Z_1$ and $Z_2$.

    As seen in the proof of \Cref{lem:cvc}, $S\cap R\cup U \cup Z\cup Q$ cannot be equal to $R\cup (Q\cup \{z\})$, for any $z\in Z$. Also, $S\cap R\cup U \cup Z\cup Q$ cannot be equal to $Q\cup (R\cup \{u\})$, for any $u\in U$. This is because $G[S]$ is not connected in these cases. This means that $G^\prime[S]$ is also not connected, because $V(G^\prime)=V(G)$ and $E(G^\prime)=E(G)\setminus \{q_1z_{13}\}$. 

    If $S\cap(R \cup U \cup Z \cup Q)=R\cup Z_2=\{z_0,z_{12},q_2,q_3\}$, the graph $G^\prime[S]$ is disconnected because no vertex adjacent to any vertex in $\{a_3,c_3,p_1,p_2,q_3\}$ (where $p_1,p_2$ are the private neighbors of $a_3$ and $c_3$ in $S$) is adjacent to any other vertex in $S$.

    Therefore, any minium-sized connected vertex cover of $G$ is one of the following types:

    \begin{enumerate}
        \item \textbf{Type A:} The sets formed by $A\cup C\cup R \cup Z$ and exactly one private neighbor of each support vertex. 
        \item \textbf{Type B:} The sets formed by $A\cup C\cup R \cup Z^\prime$ and exactly one private neighbor of each support vertex. 
        \item \textbf{Type C:} The sets formed by $A\cup C\cup Q\cup U$ and exactly one private neighbor of each support vertex. 
        \item \textbf{Type D:} The sets formed by $A\cup C\cup Q\cup U^\prime$ and exactly one private neighbor of each support vertex. 
        \item \textbf{Type E:} The sets formed by $A\cup C\cup R \cup Z_1$ and exactly one private neighbor of each support vertex. 
    \end{enumerate}

It can be verified that each of these sets are connected, and indeed form vertex covers of $G$. \qed
\end{proof}

Now we are ready to show that $cvc(G^\prime)$ guards are sufficient for the defender to defend an infinite sequence of attacks on the graph $G^\prime$.

\begin{lemma}\label{lem:lemma13}
    For the graph $G^\prime$, we have $ecvc(G^\prime)=cvc(G^\prime)$ and any sequence of attacks can be defended by using only configurations of Type B, Type C, Type D and Type E. 
\end{lemma}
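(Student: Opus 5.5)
The plan is to show directly that the family $\mathcal{W}$ of all connected vertex covers of $G^\prime$ of Types B, C, D and E (from \Cref{lem:typeconfignew}) is closed under defence. That is, for every configuration $S\in\mathcal W$ and every edge $xy$ of $G^\prime$, there is a legal one-round move from $S$ to some $S^\prime\in\mathcal W$ in which a guard crosses $xy$. Together with $cvc(G^\prime)=19$ (\Cref{lem:cvcnew}), starting the $19$ guards on any Type B configuration then gives $ecvc(G^\prime)=cvc(G^\prime)$. I formalise a legal move from $S$ to $S^\prime$ as a bijection $\phi:S\to S^\prime$ with $\phi(s)=s$ or $s\phi(s)\in E(G^\prime)$ for every $s$, and with $\phi(x)=y$ for the attacked edge. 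It is then enough to exhibit such bijections, which I would usually build as unions of disjoint paths and swaps of guards.

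\textbf{Step 1: easy attacks.} If both $x,y\in S$, the two guards swap and $S^\prime=S$. If the attack is on a support--private edge whose private endpoint is unoccupied, the support guard moves to that private vertex and the occupied private neighbour moves onto the support vertex. This changes only the private choice, so the type is kept. Since $A\cup C$ is always occupied, every remaining attack is on an edge with exactly one endpoint $v\in Q\cup R\cup Z\cup U$ unoccupied. The response must then land in a type containing $v$. Reading off the types:
\begin{itemize}
\item $z_0,z_{12}$ lie only in Type E, and $z_{13}$ only in Type B;
\item $u_0,u_{12}$ lie only in Type C, and $u_{13},u_{23}$ in Types C and D;
\item $z_{23}$ lies in Types B and E;
\item $q_1,q_2$ lie in Types B, C and D, and $q_3$ in Types C, D and E;
\item $r_1,r_2$ lie in Types B, D and E, and $r_3$ in Types B and E.
\end{itemize}

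\textbf{Step 2: the transition table.} For each ordered pair consisting of a source type and a target type, I list explicit guard movements for every attacked edge forcing that transition. The transitions between B and E, and between C and D, only reshuffle guards inside $Q\cup Z$, respectively $R\cup U$, using the edges of the small bipartite graphs $G^\prime[Q\cup Z]$ and $G^\prime[R\cup U]$. For example, Type E $\to$ Type B requires moving guards from $\{z_0,z_{12},q_3\}$ to $\{q_1,q_2,z_{13}\}$, which can be done along $z_0q_1$, $z_{12}q_2$, $q_3z_{13}$. The cross transitions between $\{B,E\}$ and $\{C,D\}$ instead route guards through $A\cup C$ and the $4$-cycles $a_iq_ic_ir_i$, for instance $r_i\to c_i\to u$ together with $a_i\to q_i$. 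For each required target I also choose the cheapest option, e.g.\ going to D rather than C when attacked at $u_{13}$ or $u_{23}$.

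\textbf{Step 3: verification.} For each entry I check three things. First, the attacked edge is crossed by a guard. Second, every other guard moves at most one edge. Third, the resulting set is exactly the claimed type, which is connected by \Cref{lem:typeconfignew}.

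The main obstacle is the transition from Type B or E into Type C, forced by attacks on $u_0$ or $u_{12}$. This is exactly where the analogous move failed from Type A in \Cref{lem:attack}. I would first run the counting argument there on $X=C\cup U\cup P_C$. A Type C configuration needs $10$ guards in $X$, and every neighbour of $X$ outside $X$ lies in $Q\cup R$. A Type A configuration has only $3$ guards in $Q\cup R$, which is why that transition is impossible. Type B, however, has $5$ guards in $Q\cup R$, and Type E has $4$, so the count no longer obstructs the move. The real work is then to find an explicit bijection, for instance via Hall's condition on the bipartite "can-move-to" graph between $S$ and the target set. In it, $R$-guards enter $C$ and $C$-guards enter $U$, while the $A$-guards and the $Z$-side guards refill $A\cup Q$. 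Removing $q_1z_{13}$ is precisely what makes Type E available as an intermediate configuration. So I expect the delicate cases to be Type E $\to$ Type C (attack at $u_0$ or $u_{12}$) and Type C or D $\to$ Type E (attack at $z_0$ or $z_{12}$), and I would settle these by hand first.
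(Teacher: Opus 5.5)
Your framework is the paper's: show that the Type~B--E configurations are closed under one-round responses, organised as a table of source type, unoccupied endpoint and target type. Your list of which types contain which vertex is also correct. The gap is that for this lemma that table, with an explicit response for every entry, \emph{is} the proof, and you never produce it. The count on $X=C\cup U\cup P_C$ only shows that the obstruction of \Cref{lem:attack} disappears; it does not construct a move. Likewise, Hall's condition only helps once it is actually checked case by case. For Type~E the count is tight: all four guards on $R\cup\{q_3\}$ must enter $X$ and none may leave it, so $q_3$ must be refilled from $z_{23}$ or $a_3$. A valid response to an attack on $c_1u_0$ is then $r_1\to c_1\to u_0$, $r_2\to u_{12}$, $r_3\to u_{13}$, $z_{23}\to q_3\to c_3\to u_{23}$, $z_0\to q_1$, $z_{12}\to q_2$. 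You need moves of this kind for every source type and every attacked edge. The paper writes them out for Type~B and tabulates the targets for the other types.

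Two parts of the plan are also wrong as stated. First, the B$\leftrightarrow$E and C$\leftrightarrow$D transitions are not confined to $G'[Q\cup Z]$ and $G'[R\cup U]$. When the occupied endpoint of the attacked edge lies in $A\cup C$, that guard must cross, and the refilling runs through the $4$-cycles. For example, from Type~B an attack on $c_3q_3$ vacates $c_3$. Since $q_3$, $u_{13}$ and $u_{23}$ are empty and a private guard cannot leave its triangle, $c_3$ can only be refilled from $r_3$. If the target is Type~E, $r_3$ must in turn be refilled from $a_3$, and $a_3$ from $z_{13}$ or $z_{23}$. This gives the paper's chain $z_{13}\to a_3\to r_3\to c_3\to q_3$. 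Similarly, Type~E$\to$B under an attack on $c_1q_1$ needs a chain such as $z_0\to a_1\to r_1\to c_1\to q_1$. So long forced chains are not limited to the cross cases you flag. Second, Step~1 omits the attack on a private--private edge $pp'$. It is answered by moving $p\to p'$, but without it your claim that every remaining attack has its unoccupied endpoint in $Q\cup R\cup Z\cup U$ is false.
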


\begin{proof}
There exists a winning strategy for the defender, where the defender starts with one of the configurations of Type B, Type C, Type D, and Type E; and end up at one of the configurations of Type B, Type C, Type D, and Type E after each attack.

If both endpoints of the attacked edge are occupied, interchange their guards. This traverses the attacked edge and leaves the occupied set unchanged.

\textit{Attacks involving a private vertex.}

Let $s$ be a support vertex, and let $p,p'$ be its private
neighbours, with $p$ occupied and $p'$ unoccupied.

For an attack on $pp'$, move the guard from $p$ to $p'$.
For an attack on $sp'$, simultaneously move
\[
s\longrightarrow p',
\qquad
p\longrightarrow s.
\]

All the other guards stay. In both cases, the only change in the
occupied set is the replacement of $p$ by $p'$. Thus, the
type is unchanged.

Now we describe a strategy of the defender when the guards occupy a Type B configuration. The attacker attacks an edge with one endpoint unoccupied, and no endpoint being a private vertex. Recall that a Type B configuration consists of all the vertices in $A\cup C$, and a private neighbor of each support vertex, and all the vertices in $R\cup Z^\prime$ where $Z^\prime=\{z_{23},z_{13},q_1,q_2\}$. A Type C configuration consists of all the vertices in $A\cup C \cup Q\cup U$, and a private neighbor of each support vertex. A Type E configuration consists of all the vertices in $A\cup C$, and a private neighbor of each support vertex, and all the vertices in $R\cup Z_1$ where $Z_1=\{z_0,z_{12},z_{23},q_3\}$. In the strategy described below, no guard on a private vertex moves. Also, if a vertex with a guard is not mentioned, the guard on this vertex stays on his position. The notaion $v_1\rightarrow v_2\rightarrow\ldots\rightarrow v_k$, where $v_1,v_2,\ldots v_{k-1}$ are occupied vertices and $v_k$ is an unoccupied vertex means that the guard on $v_{k-1}$ moves to $v_k$, the guard on $v_{k-2}$ moves to $v_{k-1}$, and so on such that the guard on $v_1$ moves to $v_2$. At the end of this movement, the vertices $v_2,v_3,\ldots,v_k$ are occupied and the vertex $v_1$ is unoccupied.

\begin{enumerate}
    \item The unoccupied endpoint of the attacked edge is $q_3,z_0$ or $z_{12}$. In these cases, the guards rearrange themselves from a Type B to a Type E configuration.
    \begin{enumerate}
        \item The unoccupied endpoint of the attacked edge is $q_3$.
        \begin{enumerate}
            \item The attacked edge is $z_{13}q_3$:
            
             $z_{13}\rightarrow q_3$.\\
            $q_1\rightarrow z_0$.\\
            $q_2\rightarrow z_{12}$.
           
            \item The attacked edge is $z_{23}q_3$:

            $q_2\rightarrow z_{23}\rightarrow q_3$.\\
            $q_1\rightarrow z_{12}$.\\
            $z_{13}\rightarrow a_1 \rightarrow z_0$.
            
            \item The attacked edge is $a_3q_3$:

            $z_{13}\rightarrow a_3 \rightarrow q_3$.\\
            $q_2 \rightarrow z_{12}$.\\
            $q_1\rightarrow z_0$.
            
            \item The attacked edge is $c_3q_3$:

            $z_{13}\rightarrow a_3 \rightarrow r_3 \rightarrow c_3 \rightarrow q_3$.\\
            $q_2\rightarrow z_0$.\\
            $q_1 \rightarrow z_{12}$.
        \end{enumerate}
        \item The unoccupied endpoint of the attacked edge is $z_0$.
        \begin{enumerate}
            \item The attacked edge is $q_1z_0$:

            Same as $1(a)(i)$ above.
        \item The attacked edge is $q_2z_0$:

             Same as $1(a)(iv)$ above.
            \item The attacked edge is $a_1z_0$:

            Same as $1(a)(ii)$ above.
        \end{enumerate}
        \item The unoccupied endpoint of the attacked edge is $z_{12}$.

        \begin{enumerate}
            \item The attacked edge is $q_1z_{12}$:

            Same as $1(a)(ii)$ above.
            
            \item The attacked edge is $q_2z_{12}$:

            Same as $1(a)(iii)$ above.
            
            \item The attacked edge is $a_1z_{12}$:

            $z_{13}\rightarrow a_1 \rightarrow z_{12}$.\\
            $q_1\rightarrow z_0$.
            $q_2\rightarrow z_{23} \rightarrow q_3$.
            
            \item The attacked edge is $a_2z_{12}$:

            $q_2 \rightarrow a_2 \rightarrow z_{12}$.\\
            $q_1 \rightarrow z_0$.\\
            $z_{13}\rightarrow q_3$.
        \end{enumerate}
    \end{enumerate}
    
    \item The unoccupied endpoint of the attacked edge is $u_0,u_{12},u_{23}$ or $u_{13}$. In these cases, the guards rearrange themselves from a Type B to a Type C configuration.
    \begin{enumerate}
        \item The unoccupied endpoint of the attacked edge is $u_0$.
        \begin{enumerate}
            \item The attacked edge is $c_1u_0$:

            $z_{13}\rightarrow a_1 \rightarrow q_1 \rightarrow c_1 \rightarrow u_0.$\\
            $r_1\rightarrow u_{13}$.\\
            $z_{23}\rightarrow q_3$.\\
            $r_2\rightarrow u_{12}$.\\
            $r_3\rightarrow u_{23}$.
            
            \item The attacked edge is $r_1u_0$:

            $r_1\rightarrow u_0$.\\
            $r_2\rightarrow u_{12}$.\\
            $r_3\rightarrow u_{13}$.\\
            $z_{23}\rightarrow q_2 \rightarrow c_2 \rightarrow u_{23}$.\\
            $z_{13}\rightarrow q_3$.
            
            \item The attacked edge is $r_2u_0$:

            $r_2\rightarrow u_0$.\\
            $r_1\rightarrow u_{12}$.\\
            $r_3\rightarrow u_{13}$.\\
            $z_{23}\rightarrow q_2 \rightarrow c_2 \rightarrow u_{23}$.\\
            $z_{13}\rightarrow q_3$.

        \end{enumerate}
        \item The unoccupied endpoint of the attacked edge is $u_{12}$.
        \begin{enumerate}
            \item The attacked edge is $r_1u_{12}$:

            Same as $2(a)(iii)$ above.
            
            \item The attacked edge is $r_2u_{12}$:

             Same as $2(a)(ii)$ above.
            
            \item The attacked edge is $c_1u_{12}$:

            $r_1\rightarrow c_1 \rightarrow u_{12}$.\\
            $r_2\rightarrow u_0$.\\
            $r_3\rightarrow u_{13}$.\\
            $z_{23}\rightarrow q_2 \rightarrow c_2 \rightarrow u_{23}$.\\
            $z_{13}\rightarrow q_3$.
            
            \item The attacked edge is $c_2u_{12}$:
            
            $r_2\rightarrow c_2 \rightarrow u_{12}$.\\
            $r_1\rightarrow u_0$.\\
             $r_3\rightarrow u_{13}$.\\
            $z_{23}\rightarrow q_2 \rightarrow c_2 \rightarrow u_{23}$.\\
            $z_{13}\rightarrow q_3$.
        \end{enumerate}
        \item The unoccupied endpoint of the attacked edge is $u_{23}$.
        \begin{enumerate}
            \item The attacked edge is $r_2u_{23}$:

            $r_2 \rightarrow u_{23}$.\\
            $r_1\rightarrow u_0$.\\
            $z_{23}\rightarrow q_2 \rightarrow c_2\rightarrow u_{12}$.\\
            $z_{13}\rightarrow q_3$.\\
            $r_3\rightarrow u_{13}$.
            
            \item The attacked edge is $r_3u_{23}$:

            Same as $2 (a)(i)$ above.
            
            \item The attacked edge is $c_2u_{23}$:

             Same as $2 (a)(iii)$ above.
             
            \item The attacked edge is $c_3u_{23}$:

            $r_3\rightarrow c_3 \rightarrow u_{23}$.\\
            $z_{13}\rightarrow q_3$.\\
            $r_2\rightarrow u_0$.\\
            $z_{23}\rightarrow q_2 \rightarrow c_2 \rightarrow u_{12}$.\\
            $r_1\rightarrow u_{13}$.
        \end{enumerate}
        \item The unoccupied endpoint of the attacked edge is $u_{13}$.
        \begin{enumerate}
            \item The attacked edge is $r_1u_{13}$:

            Same as $2 (c) (iv)$ above.
            \item The attacked edge is $r_3u_{13}$:

            Same as $2 (c) (i)$ above.
            \item The attacked edge is $c_1u_{13}$:

            $r_1\rightarrow c_1 \rightarrow u_{13}$.\\
            $r_2\rightarrow u_0$.\\
            $r_3\rightarrow u_{23}$.\\
            $z_{23}\rightarrow q_2 \rightarrow c_2 \rightarrow u_{12}$.\\
            $z_{13}\rightarrow q_3$.
            
            \item The attacked edge is $c_3u_{13}$:

            $r_3\rightarrow c_3 \rightarrow u_{13}$.\\
            $z_{23}\rightarrow q_3$.\\
            $z_{13}\rightarrow a_1\rightarrow q_1\rightarrow c_1\rightarrow u_0$.\\
            $r_1\rightarrow u_{12}$.\\
            $r_2\rightarrow u_{23}$.
            
        \end{enumerate}
    \end{enumerate}
\end{enumerate}

A similar case analysis follows for Types C, D and E. In the interest of space, we omit the full proof as the cases are quite symmetric. We give below a table listing the target configurations for each original configuration and unoccupied vertices. It can be checked that it is possible to move from each original configuration to the corresponding target configuration, defending each attack.

\begin{center}
\renewcommand{\arraystretch}{1.2}
\begin{tabular}{c|l|c}
Current type & Unoccupied endpoint $v$ & Target type\\
\hline
C & $r_1,r_2,r_3,z_{13},z_{23}$ & B\\
C & $z_0,z_{12}$ & E\\
D & $r_3,z_0,z_{12}$ & E\\
D & $z_{13},z_{23}$ & B\\
D & $u_0,u_{12}$ & C\\
E & $q_1,q_2,z_{13}$ & B\\
E & $u_{13},u_{23}$ & D\\
E & $u_0,u_{12}$ & C
\end{tabular}
\end{center}

Thus, we have shown that $ecvc(G^\prime)=cvc(G^\prime)$.

\qed\end{proof}
\begin{lemma}\label{lem:type-A-loses-Gprime}
No Type A configuration of $G^\prime$ is eternally winning.
In fact, an attack on $c_1u_0$ has no legal response.
\end{lemma}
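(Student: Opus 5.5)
The plan is to mirror the counting argument of \Cref{lem:attack} for $G$, using the classification of minimum connected vertex covers of $G^\prime$ in \Cref{lem:typeconfignew}. Suppose the guards occupy a Type A configuration of $G^\prime$ and the attacker attacks $c_1u_0$. Since $u_0$ is unoccupied in a Type A configuration, any legal response places a guard on $u_0$. The new occupied set must be a connected vertex cover of size $19=cvc(G^\prime)$ (by \Cref{lem:cvcnew}). By \Cref{lem:typeconfignew}, the only types containing a $U$-vertex are Types C and D. Type D uses $U^\prime=\{r_1,r_2,u_{13},u_{23}\}$, which does not contain $u_0$. Hence the response would have to be a Type C configuration.

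Next, let $P_C$ be the six private neighbours of the vertices in $C$, and let $X=C\cup U\cup P_C$. Removing the edge $q_1z_{13}$ does not affect any edge incident to $X$, so, exactly as in $G$, every neighbour of $X$ outside $X$ lies in $Q\cup R$. I would verify this directly from the construction: $C$-vertices are adjacent to $Q$, $R$, $U$ and $P_C$; $U$-vertices to $C$ and $R$; and private vertices only to their triangle.

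Then I count guards. In a Type A configuration, $X$ holds six guards (the three on $C$ and one private neighbour of each $c_i$), $Q\cup R$ holds exactly the three guards on $R$, and $Q$ is empty. In one round each guard moves at most one step, so after the response at most $6+3=9$ guards lie in $X$. A Type C configuration, however, has $3+4+3=10$ guards in $X$. This is a contradiction, so no legal response exists and the attacker wins. Therefore no Type A configuration is eternally winning.

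The main obstacle I expect is the first step: making sure no size-$19$ connected vertex cover containing $u_0$ other than Type C exists in $G^\prime$. This depends entirely on the correctness of \Cref{lem:typeconfignew}, in particular that $U^\prime$ excludes $u_0$ and that the $R\cup U$ side is unchanged from $G$. The neighbourhood check for $X$ is routine.
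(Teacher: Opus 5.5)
Your proposal is correct and matches the paper's proof: force a guard onto $u_0$, use \Cref{lem:typeconfignew} to conclude the response must be Type C, then count guards that can enter $X=C\cup U\cup P_C$ in one round (at most $6+3=9$, versus the $10$ a Type C configuration needs). Your remark that Type D is excluded because $u_0\notin U^\prime$, and that Types A, B and E contain no $U$-vertex, only spells out a step the paper leaves implicit.
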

\begin{proof}
Let $P_C$ be the six private neighbors of the vertices in $C$, and set
$X=C\cup U\cup P_C$. Every neighbor of $X$ outside $X$ belongs to
$Q\cup R$. A Type A configuration has six guards in $X$ and three
in $R$, while $Q$ is unoccupied. Thus, at most nine guards can occupy
$X$ after one round.

Attack the edge $c_1u_0$. The vertex $u_0$ is initially unoccupied,
so any legal response must place a guard there. By
\Cref{lem:typeconfignew}, the resulting minimum connected vertex
cover would have to be Type C. But every Type C configuration has
ten guards in $X$: three on $C$, four on $U$, and one private
neighbor of each vertex in $C$. This is impossible after one round.
Hence, the attack has no legal response.
\qed\end{proof}

In particular, these results give $ecvc(G^\prime)=cvc(G^\prime)=19$.

\section{A structural criterion for equality}
\label{sec:structural-criterion}
\label{sec:sufficient-condition}

The counterexample in \Cref{sec:Counter-example} separates two properties that
might initially appear equivalent. Requiring every vertex to occur in some
minimum connected vertex cover guarantees the pointwise availability of
minimum configurations, but it does not ensure that these configurations can
be reached from one another by legal responses to prescribed attacks. We now
identify a structural setting in which this dynamic compatibility follows
from the geometry of the minimum covers.

\subsection{The tight counting class}

The starting point is a sharp lower bound obtained by counting the edges
inside a connected vertex cover and those incident with its complement. The
equality case naturally produces the structure needed later: the cover
induces a tree, while every omitted vertex has degree two.

\begin{lemma}\label{counting-bound}
Let $G$ be a connected graph with $n$ vertices, $m$ edges, and
$\delta(G)\geq 2$. Then
\begin{equation}\label{cvc-lower-bound}
    cvc(G)\geq 2n-m-1.
\end{equation}
Moreover, equality holds if and only if $G$ has a connected vertex cover $C$
such that $G[C]$ is a tree and every vertex in $V(G)\setminus C$ has degree
two. When equality holds, every minimum connected vertex cover has these two
properties.
\end{lemma}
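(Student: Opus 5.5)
We fix an arbitrary connected vertex cover $C$ of $G$ and count edges in two ways. Write $k=|C|$ and $I=V(G)\setminus C$. Since $C$ is a vertex cover, $I$ is an independent set, so every edge of $G$ either lies inside $G[C]$ or joins a vertex of $I$ to a vertex of $C$. Hence
\[
m \;=\; |E(G[C])| + \sum_{v\in I}\deg(v).
\]
Since $G[C]$ is connected, $|E(G[C])|\geq k-1$. Since $\delta(G)\geq 2$, $\sum_{v\in I}\deg(v)\geq 2|I| = 2(n-k)$. Combining these gives $m\geq (k-1)+2(n-k)=2n-k-1$, that is, $k\geq 2n-m-1$. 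Taking $C$ to be a minimum connected vertex cover yields \eqref{cvc-lower-bound}. (If $C=V(G)$, the sum over $I$ is empty and the same inequality still holds.)

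For the characterization of equality, we observe that the chain of inequalities above is tight for a given $C$ exactly when both intermediate bounds are equalities. The first, $|E(G[C])|=|C|-1$ with $G[C]$ connected, means $G[C]$ is a tree. The second, $\sum_{v\in I}\deg(v)=2|I|$ with every degree at least $2$, means every vertex of $I$ has degree exactly two. So $|C|=2n-m-1$ holds if and only if $G[C]$ is a tree and every vertex outside $C$ has degree two.

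This gives both directions. If $G$ has a connected vertex cover $C$ with these two properties, then $|C|=2n-m-1$. Combined with \eqref{cvc-lower-bound}, this forces $cvc(G)=2n-m-1$, and $C$ is in fact a minimum connected vertex cover. Conversely, if $cvc(G)=2n-m-1$, then any minimum connected vertex cover $C$ satisfies $|C|=2n-m-1$, so by the tightness analysis it has both properties. This also proves the final sentence: when equality holds, every minimum connected vertex cover induces a tree and omits only degree-two vertices.

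There is no real obstacle here. The only care needed is to use the independence of $I$, so that no edge is counted twice and none is missed, and to handle the degenerate case $C=V(G)$, where the argument above goes through unchanged.
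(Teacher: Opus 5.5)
Your proposal is correct and matches the paper's proof: you count $m = |E(G[C])| + \sum_{v\in I} d_G(v) \geq (|C|-1) + 2|I|$ using the independence of $I$ and the connectivity of $G[C]$. You then read off equality as exactly ``$G[C]$ is a tree and every omitted vertex has degree two,'' and apply this to an arbitrary minimum connected vertex cover for the final assertion.
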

\begin{proof}
Let $C$ be any connected vertex cover and put $I=V(G)\setminus C$.
Since $I$ is independent and $G[C]$ is connected,
\[
    m=|E(G[C])|+\sum_{v\in I}d_G(v)
      \geq (|C|-1)+2|I|
      =2n-|C|-1.
\]
This proves \eqref{cvc-lower-bound}. Equality in this calculation holds
exactly when $|E(G[C])|=|C|-1$ and $d_G(v)=2$ for every $v\in I$.
As $G[C]$ is connected, the first condition says precisely that $G[C]$ is a
tree. Thus any connected vertex cover with the stated properties attains the
lower bound and is minimum. Conversely, if the lower bound is attained,
applying the same calculation to any minimum connected vertex cover gives
both properties. \qed
\end{proof}

The equality case of \Cref{counting-bound} is precisely the setting in which
the connected part of a minimum cover has no redundant edge and every vertex
outside it contributes the smallest degree permitted by the hypothesis.
Accordingly, let $\mathcal F$ be the class of connected graphs $G$ with
$\delta(G)\geq 2$ for which equality holds in
\eqref{cvc-lower-bound}. By \Cref{counting-bound}, this is equivalent to the
existence of a connected vertex cover $C$ such that $G[C]$ is a tree and
every vertex of $V(G)\setminus C$ has degree two.

\subsection{Characterizing eternal equality}

Membership in $\mathcal F$ alone does not imply $ecvc(G)=cvc(G)$. Consider
the diamond $K_4-e$. Its two vertices of degree three form the unique minimum
connected vertex cover; they induce a tree, and the two omitted vertices have
degree two. Nevertheless, an attack from an occupied vertex to an omitted
vertex cannot be defended with two guards, because no alternative minimum
connected vertex cover is available.

Thus two ingredients must be distinguished: the tree-like structure provided
by membership in $\mathcal F$, and the availability of a minimum cover
containing each vertex. The next theorem shows that, within $\mathcal F$,
these ingredients fit together exactly. It also gives an equivalent cycle
condition.

\begin{theorem}\label{thm:equality-F}
Let $G\in\mathcal F$. The following conditions are equivalent:
\begin{enumerate}
    \item\label{eternal-equality} $ecvc(G)=cvc(G)$;
    \item\label{vertex-membership} every vertex of $G$ belongs to some
    minimum connected vertex cover of $G$;
    \item\label{cycle} every cycle of $G$ contains at least
    two vertices whose degree in $G$ is two.
\end{enumerate}
Whenever these conditions hold, every minimum connected vertex cover of $G$
is an eternally winning configuration.
\end{theorem}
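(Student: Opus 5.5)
The implication (\ref{eternal-equality})$\Rightarrow$(\ref{vertex-membership}) was already shown in the introduction, and it holds for every graph: from a winning configuration of $cvc(G)$ guards, an attack on an edge at an unoccupied vertex forces a guard onto that vertex, so the new configuration is a minimum connected vertex cover containing it. So I would prove (\ref{vertex-membership})$\Rightarrow$(\ref{cycle}), and then (\ref{cycle})$\Rightarrow$ ``every minimum connected vertex cover is eternally winning''. The latter gives (\ref{eternal-equality}) together with the final claim. Throughout I would use \Cref{counting-bound}. Since $G\in\mathcal F$, every minimum connected vertex cover $C$ induces a tree, and every vertex of $I=V(G)\setminus C$ has degree two. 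Also $I$ is independent.

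For (\ref{vertex-membership})$\Rightarrow$(\ref{cycle}), I would argue by contraposition. Let $Z$ be a cycle with at most one vertex of degree two, and let $C$ be any minimum connected vertex cover. Since $G[C]$ is a tree, $Z\not\subseteq C$. Every vertex of $Z\setminus C$ has degree two. Hence $Z$ has exactly one degree-two vertex $w$, and every minimum connected vertex cover omits $w$. This contradicts (\ref{vertex-membership}).

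For the main direction, assume (\ref{cycle}) and let the guards occupy an arbitrary minimum connected vertex cover $C$. I would show that every attack can be answered by moving to another minimum connected vertex cover; induction on the number of attacks then finishes the proof. If both endpoints of the attacked edge are occupied, the two guards swap. Otherwise the attacked edge is $uv$ with $u\in C$ and $v\in I$. The vertex $v$ has exactly two neighbours $u,w$, and both lie in $C$ because $I$ is independent. Let $P$ be the $u$--$w$ path in the tree $G[C]$. Then $P+v$ is a cycle $Z$. By (\ref{cycle}), $Z$ contains a degree-two vertex $x\neq v$, and $x$ lies on $P$. The response is as follows. The guard on $u$ moves to $v$. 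Each guard on the subpath of $P$ from $x$ to $u$ moves one step towards $u$. All other guards stay. If $x=u$, only the guard on $u$ moves. Thus the guard on $u$ crosses the attacked edge, each vertex receives at most one guard, and the new occupied set is $C'=(C\setminus\{x\})\cup\{v\}$.

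It remains to check that $C'$ is a connected vertex cover of size $|C|$; it is then minimum, so the invariant is preserved. For the cover property, only edges at $x$ can be lost. Since $d_G(x)=2$, both neighbours of $x$ are its neighbours on $Z$, which lie in $C'$. For connectivity, $x$ has degree at most two in the tree $G[C]$. Deleting it leaves at most two components: one containing the $u$-end of $P$ and one containing the $w$-end. The new vertex $v$ is adjacent to both $u$ and $w$, so it reconnects them. One edge case needs care: $x\in\{u,w\}$, where one side may be empty. I expect this verification to be the main obstacle, as it is where the precise role of $x$ lying on the cycle through $v$ matters. Once $C'$ is known to be a minimum connected vertex cover, \Cref{counting-bound} gives it the same tree and degree-two structure. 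So the argument repeats indefinitely, and $C$ is eternally winning.
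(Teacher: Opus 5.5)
Your proposal is correct and is essentially the paper's proof: the same forced-guard argument for (1)$\Rightarrow$(2), the same use of \Cref{counting-bound} (minimum covers induce trees and omit only degree-two vertices) for (2)$\Rightarrow$(3), and the same response to an attack on $uv$. That response shifts guards along the tree path from $u$ to a degree-two vertex $x\neq v$ on the unique cycle of $G[C\cup\{v\}]$, giving $C'=(C\setminus\{x\})\cup\{v\}$. The edge case $x\in\{u,w\}$ that you flag is harmless: if, say, $x=u$, then $x$ is a leaf of $G[C]$ (its other neighbour is $v\notin C$), so $G[C]-x$ is still a tree containing $w$, and $v$ joins it through the edge $vw$.
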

\begin{proof}
\emph{\ref{eternal-equality}$\Rightarrow$\ref{vertex-membership}.}
First suppose that condition~\ref{eternal-equality} holds, and fix a winning
strategy using $cvc(G)$ guards. Every configuration reached by the strategy
is a minimum connected vertex cover. Let $v$ be any vertex. If $v$ is not
occupied in the initial configuration, attack an edge incident with $v$.
The other endpoint is occupied, and defending this attack places a guard on
$v$. Thus some minimum connected vertex cover contains $v$, proving
condition~\ref{vertex-membership}.

\emph{\ref{vertex-membership}$\Rightarrow$\ref{cycle}.}
Next suppose that condition~\ref{vertex-membership} holds. By
\Cref{counting-bound}, every minimum connected vertex cover induces a
tree and contains all vertices of degree at least three. Therefore, a
cycle cannot consist entirely of vertices of degree at least three, since
such a cycle would be contained in the tree induced by a minimum connected
vertex cover.
If a cycle had exactly one vertex $v$ of degree two, a minimum connected
vertex cover containing $v$ would contain the entire cycle. This is also
impossible. Hence condition~\ref{cycle} holds.

\emph{\ref{cycle}$\Rightarrow$\ref{eternal-equality}.}
Finally, suppose that condition~\ref{cycle} holds. We give a strategy
starting from an arbitrary minimum connected vertex cover $C$.
If both endpoints of the attacked edge are occupied, the two endpoint guards
exchange their positions, leaving the occupied set unchanged.

Otherwise, let the attacked edge be $uv$, where $u\in C$ and $v\notin C$.
By \Cref{counting-bound}, $v$ has exactly two neighbours, say $u$ and $w$,
and both belong to $C$. Since $G[C]$ is a tree, $G[C\cup\{v\}]$ has a unique
cycle $Q$, the unique $u$ to $w$ path in $G[C]$, together with $uv$ and $vw$.
By condition~\ref{cycle}, there is a vertex $z\in V(Q)\setminus\{v\}$
with $d_G(z)=2$. Put
\[
    C'=(C\cup\{v\})\setminus\{z\}.
\]
The two neighbours of $z$ are its neighbours on $Q$. Deleting $z$ therefore
breaks the unique cycle without disconnecting the graph, so $G[C']$ is a
tree. Also, if $I=V(G)\setminus C$, then
\[
    V(G)\setminus C'=(I\setminus\{v\})\cup\{z\}.
\]
Clearly this set is independent, since $I\setminus\{v\}$ is independent, and both
neighbours of $z$ lie in $C\cup\{v\}$. Hence $C'$ is a connected vertex
cover of the same size as $C$, and is therefore minimum.

To realize this change by legal guard moves, let
\[
    u=p_0,p_1,\ldots,p_t=z
\]
be the unique $u$ to $z$ path in $G[C]$. Simultaneously move the guard on $p_0$
to $v$, and move the guard on $p_i$ to $p_{i-1}$ for each $1\leq i\leq t$.
All other guards remain fixed. If $z=u$, only the first move is needed.
Every guard moves by at most one edge, a guard crosses the attacked edge
$uv$, and the final occupied set is exactly $C'$. Thus every attack can be
defended by moving to another minimum connected vertex cover. Repeating the
same argument gives an eternal strategy from every such cover, proving
\ref{eternal-equality} and the final assertion. \qed
\end{proof}

For later applications, it is convenient to record the criterion in a form
that starts from an explicit witness cover rather than from membership in
$\mathcal F$.

\begin{corollary}\label{tree-cover-condition}
Let $G$ be a connected graph with $\delta(G)\geq 2$. Suppose that $G$ has a
connected vertex cover $C$ such that $G[C]$ is a tree and every vertex outside
$C$ has degree two. If every vertex of $G$ belongs to some minimum connected
vertex cover, then
\[
    ecvc(G)=cvc(G)=|C|=2|V(G)|-|E(G)|-1.
\]
Moreover, every minimum connected vertex cover is an eternally winning
configuration.
\end{corollary}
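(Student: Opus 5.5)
The plan is to reduce the corollary directly to \Cref{counting-bound} and \Cref{thm:equality-F}. First, apply \Cref{counting-bound} to the given cover $C$. The graph $G$ is connected with $\delta(G)\geq 2$. Also, $C$ is a connected vertex cover with $G[C]$ a tree and every vertex of $V(G)\setminus C$ of degree two. The equality clause of that lemma then shows that $C$ attains the lower bound \eqref{cvc-lower-bound}. Hence $C$ is a minimum connected vertex cover, and $cvc(G)=|C|=2|V(G)|-|E(G)|-1$. For concreteness, one can recompute this: with $I=V(G)\setminus C$ independent, $|E(G)|=(|C|-1)+2|I|=2|V(G)|-|C|-1$. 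In particular, $G\in\mathcal F$ by definition of the class.

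Second, the hypothesis that every vertex lies in some minimum connected vertex cover is exactly condition~\ref{vertex-membership} of \Cref{thm:equality-F}. Since $G\in\mathcal F$, the theorem gives condition~\ref{eternal-equality}, namely $ecvc(G)=cvc(G)$. Combined with the first step, this yields the full chain of equalities. The final assertion of \Cref{thm:equality-F} then states that every minimum connected vertex cover is an eternally winning configuration, which is the ``moreover'' part.

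There is no real obstacle here. The only point needing care is to check that the hypotheses of \Cref{counting-bound} are met: connectivity, $\delta(G)\geq 2$, and the fact that $C$ is a vertex cover, so that $V(G)\setminus C$ is independent. With these in place, $C$ is minimum rather than merely some connected cover. Once $G\in\mathcal F$ is established, everything else is a direct citation.
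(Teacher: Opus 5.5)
Your proposal is correct and follows the paper's own proof. \Cref{counting-bound} applied to $C$ shows that $C$ is a minimum connected vertex cover with $|C|=2|V(G)|-|E(G)|-1$, so $G\in\mathcal F$, and \Cref{thm:equality-F} (membership implies eternal equality, together with its final assertion) gives the rest; your explicit edge recount is a harmless expansion of the paper's one-line citation.
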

\begin{proof}
By \Cref{counting-bound}, $C$ is minimum and $G\in\mathcal F$.
The conclusion follows from \Cref{thm:equality-F}. \qed
\end{proof}

\subsection{Recognizing the extremal class}
\label{subsec:forest-test}

The definition of $\mathcal F$ is extremal: it refers to the value of
$cvc(G)$. To apply \Cref{thm:equality-F}, however, one would prefer a
criterion that can be read directly from the graph. The following proposition
provides exactly such a criterion by considering the subgraph induced by the
vertices of degree at least three.

\begin{proposition}\label{prop:forest-test}
Let $G$ be a connected graph with $\delta(G)\geq 2$, and put
\[
    B=\{v\in V(G):d_G(v)\geq 3\}.
\]
Then $G\in\mathcal F$ if and only if $G[B]$ is a forest.
\end{proposition}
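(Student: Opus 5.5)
The forward direction is immediate from \Cref{counting-bound}; the reverse direction I would prove by a greedy deletion procedure that starts from the whole vertex set and keeps a few invariants.

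\emph{($\Rightarrow$)} Suppose $G\in\mathcal F$. By \Cref{counting-bound}, some (indeed every) minimum connected vertex cover $C$ induces a tree, and every vertex outside $C$ has degree two. Hence every vertex of degree at least three lies in $C$, that is, $B\subseteq C$. Then $G[B]$ is an induced subgraph of the tree $G[C]$, so it is a forest.

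\emph{($\Leftarrow$)} Suppose $G[B]$ is a forest. By \Cref{counting-bound} it suffices to build a connected vertex cover $C$ such that $G[C]$ is a tree and every vertex outside $C$ has degree two. Start with $C=V(G)$ and maintain three invariants:
\begin{enumerate}
\item[(i)] $G[C]$ is connected;
\item[(ii)] $V(G)\setminus C$ is independent, so $C$ is a vertex cover;
\item[(iii)] $B\subseteq C$, so every vertex outside $C$ has degree two.
\end{enumerate}
All three hold initially, using the connectivity of $G$.

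If $G[C]$ is already a tree, we stop. Otherwise $G[C]$ contains a cycle $D$. Since $G[B]$ is a forest, $D$ is not contained in $B$, so $D$ has a vertex $z\in C\setminus B$. Because $\delta(G)\geq 2$, we have $d_G(z)=2$. Hence both neighbours of $z$ in $G$ are exactly its two neighbours on $D$, and they lie in $C$.

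Now set $C:=C\setminus\{z\}$ and check the invariants.
\begin{enumerate}
\item[(i)] $G[C\setminus\{z\}]$ is connected, since $z$ lay on a cycle of $G[C]$ and its two neighbours remain joined by the rest of $D$.
\item[(ii)] The complement stays independent, since $z$ has no neighbour outside $C$.
\item[(iii)] This is untouched, since $z\notin B$.
\end{enumerate}
Each step deletes a vertex, so the procedure terminates, and it does so with $G[C]$ a tree. The set $C$ is nonempty because $G$ has an edge that $C$ must cover.

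The final $C$ witnesses equality in \Cref{counting-bound}, so $G\in\mathcal F$. The only delicate point is showing that deleting $z$ preserves both connectivity and the independence of the complement. This rests on $d_G(z)=2$, which forces both neighbours of $z$ to lie on the chosen cycle inside $C$.
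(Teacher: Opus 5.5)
Your proof is correct. The forward direction is the same as the paper's, but for the converse you take a different route.

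The paper builds the cover in one shot. It extends the forest $G[B]$ to a spanning tree $T$ of $G$. It then observes that every non-tree edge has a degree-two endpoint, which is forced to be a leaf of $T$. Choosing one such endpoint per non-tree edge gives a set $I$ of exactly $r=m-n+1$ distinct leaves. The paper checks that $I$ is independent (using $n\geq 3$) and that $G-I=T-I$ is a tree. So the omitted vertices are described explicitly, one degree-two vertex per fundamental cycle of $T$, and the size $2n-m-1$ is visible immediately.

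You instead run an iterative cycle-breaking procedure. While $G[C]$ has a cycle, you delete a degree-two vertex on it, maintaining connectivity, independence of the complement, and $B\subseteq C$. This avoids the spanning-tree extension and the bookkeeping about leaves being distinct and independent, and it is directly algorithmic. It also uses exactly the local move from the exchange step in the proof that the cycle condition implies $ecvc(G)=cvc(G)$ in \Cref{thm:equality-F}, which ties the two arguments together nicely.

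Your procedure also implicitly recovers the paper's count. Each deletion removes one vertex and two edges from $G[C]$, so the cyclomatic number drops by one and exactly $r$ deletions occur. You do not need this, since \Cref{counting-bound} already certifies that the final $C$ attains the bound.
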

\begin{proof}
Suppose first that $G\in\mathcal F$, and let $C$ be a minimum connected
vertex cover. By \Cref{counting-bound}, $B\subseteq C$ and $G[C]$ is a
tree. Therefore $G[B]$ is a forest.

Conversely, suppose that $G[B]$ is a forest. Write $n=|V(G)|$, $m=|E(G)|$,
and $r=m-n+1$. Extend the forest $G[B]$, together with the remaining vertices
as isolated vertices, to a spanning tree $T$ of $G$. For each edge
$e\in E(G)\setminus E(T)$, at least one endpoint has degree two, otherwise
$e$ would belong to $G[B]$ and hence to $T$. Choose one degree-two endpoint
$v_e$ of each such edge, and set
\[
    I=\{v_e:e\in E(G)\setminus E(T)\}.
\]
Since $T$ is spanning, the other edge incident with $v_e$ belongs to $T$.
Thus $v_e$ is a leaf of $T$, and $e$ is its unique incident non-tree edge.
In particular, distinct non-tree edges give distinct chosen vertices, so
$|I|=r$.

We claim that $I$ is independent. Two chosen vertices cannot be adjacent by
a tree edge, since they are leaves of a tree on at least three vertices.
If two chosen vertices were adjacent by a non-tree edge, this would be the
unique non-tree edge incident with each of them. Both would then have been
chosen for the same edge, contrary to choosing only one endpoint per edge.

Every non-tree edge has an endpoint in $I$, and the vertices of $I$ are
leaves of $T$. Consequently,
\[
    G-I=T-I
\]
is a nonempty tree. Therefore $V(G)\setminus I$ is a connected vertex cover
of size $n-r=2n-m-1$. By \Cref{counting-bound}, $G\in\mathcal F$. \qed
\end{proof}

Thus membership in $\mathcal F$ is determined entirely by the high-degree
core of the graph. In particular, any construction or graph class for which
this core is acyclic brings both the counting formula and the equality
criterion into play.

\section{Applications}
\label{sec:applications}

We now apply the criterion in two complementary settings. Full subdivision
creates the required degree-two structure explicitly, whereas minimally
$2$-connected graphs possess the relevant cycle structure by a classical
theorem of Plummer.

\subsection{Full subdivisions}
\label{sec:subdivisions}

Our first application forces the required structure by subdividing every
edge. Full subdivision separates adjacent original vertices by new
degree-two vertices, while spanning trees of the original graph generate
canonical minimum connected vertex covers. For a graph $H$, let $S(H)$
denote its \emph{full subdivision}: every edge $e=ab$ is replaced by the path
$a\,s_e\,b$, where $s_e$ is a new vertex. Thus every edge is subdivided
exactly once.

\begin{theorem}\label{thm:full-subdivision}
Let $H$ be a connected graph with $n_0$ vertices, $m_0$ edges, and
$\delta(H)\geq 2$. Then
\[
    ecvc(S(H))=cvc(S(H))=2n_0-1.
\]
Moreover, every minimum connected vertex cover of $S(H)$ is an eternally
winning configuration.
\end{theorem}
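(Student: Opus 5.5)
We plan to apply \Cref{tree-cover-condition} to $G=S(H)$. First we record the basic parameters. $S(H)$ has $n_0+m_0$ vertices and $2m_0$ edges, and it is connected. Every original vertex keeps its degree from $H$, which is at least two. Every subdivision vertex $s_e$ has degree exactly two. Hence $\delta(S(H))\geq 2$ and
\[
2|V(S(H))|-|E(S(H))|-1=2(n_0+m_0)-2m_0-1=2n_0-1 .
\]

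Next we exhibit the witness cover. Fix a spanning tree $T$ of $H$ and let
\[
C_T=V(H)\cup\{s_e:e\in E(T)\}.
\]
Then $|C_T|=n_0+(n_0-1)=2n_0-1$. The graph $S(H)[C_T]$ is exactly the full subdivision $S(T)$, because two original vertices are never adjacent in $S(H)$. Since $S(T)$ is a tree, $C_T$ induces a tree. Every edge of $S(H)$ has an original endpoint, so $V(H)$ alone is already a vertex cover, and hence so is $C_T$. Finally, every vertex outside $C_T$ is some $s_e$ with $e\notin E(T)$, and therefore has degree two. This verifies the hypotheses on $C$ in \Cref{tree-cover-condition}. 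In particular $C_T$ is minimum and $cvc(S(H))=2n_0-1$ by \Cref{counting-bound}.

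It remains to check that every vertex of $S(H)$ lies in some minimum connected vertex cover. Original vertices lie in every $C_T$. For a subdivision vertex $s_e$, choose a spanning tree $T$ of $H$ containing $e$; this exists because any edge of a connected graph extends to a spanning tree. Then $s_e\in C_T$. With both conditions in hand, \Cref{tree-cover-condition} yields
\[
ecvc(S(H))=cvc(S(H))=2n_0-1,
\]
and it also shows that every minimum connected vertex cover is eternally winning.

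As a cross-check, condition~\ref{cycle} of \Cref{thm:equality-F} also holds directly. Every cycle of $S(H)$ alternates between original and subdivision vertices, so it has length at least four and contains at least two degree-two vertices.

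No step is a serious obstacle. The only point needing care is confirming that $S(H)[C_T]$ is a tree with no extra edges, which holds because original vertices are pairwise non-adjacent in $S(H)$.
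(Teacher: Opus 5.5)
Your proposal is correct and follows the paper's proof essentially step for step. You use the same spanning-tree covers $C_T$ to reach the counting bound, the same edge-extension argument for vertex membership, and the same appeal to \Cref{tree-cover-condition}.
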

\begin{proof}[of \Cref{thm:full-subdivision}]
The graph $S(H)$ has $n_0+m_0$ vertices, $2m_0$ edges, and minimum degree at
least two. Let $T$ be any spanning tree of $H$, and define
\[
    C_T=V(H)\cup\{s_e:e\in E(T)\}.
\]
Every edge of $S(H)$ has an endpoint in $V(H)$, so $C_T$ is a vertex cover.
Moreover, $S(H)[C_T]$ is exactly the subdivision of the tree $T$, and hence
is a tree. Every vertex outside $C_T$ is a subdivision vertex and has degree
two. By \Cref{counting-bound}, $C_T$ is a minimum connected vertex cover,
with
\[
    |C_T|=n_0+(n_0-1)=2n_0-1.
\]

It remains to verify the vertex-membership condition. Every original vertex
belongs to every cover $C_T$. For a subdivision vertex $s_e$, choose a
spanning tree of $H$ containing $e$. Such a tree exists by extending the
single edge $e$ to a spanning tree. Its associated cover contains $s_e$.
Thus every vertex of $S(H)$ belongs to some minimum connected vertex cover.
\Cref{tree-cover-condition} now gives the stated equality and the
assertion about every minimum cover. \qed
\end{proof}

The covers $C_T$ constructed in the proof need not exhaust all minimum
connected vertex covers, particularly when $H$ itself has vertices of degree
two. Their role is to witness the sharpness of the counting bound and to
verify that every vertex occurs in at least one minimum cover. The stronger
conclusion that every minimum connected vertex cover is an eternally winning
configuration follows from \Cref{thm:equality-F}.

\subsection{Revisiting the counterexample after subdivision}
\label{subsec:subdivided-counterexample}

The full-subdivision theorem gives a transparent before-and-after comparison
for the counterexample from \Cref{sec:Counter-example}. Subdivision retains its
underlying incidence pattern but changes precisely the structural features
responsible for the failure of equality. Let $G_0$ denote the graph shown in
\Cref{fig:counterexample}. It has $32$ vertices and minimum degree two.
Its six private triangles contribute $18$ edges, the three four-cycles
contribute $12$, the pair-indexed $Z$- and $U$-attachments contribute $24$,
and the attachments of $z_0$ and $u_0$ contribute $6$. Hence
\[
    |E(G_0)|=18+12+24+6=60.
\]
By \Cref{lem:cvc}, $cvc(G_0)=19$, while \Cref{counterexamplethm} shows that
$ecvc(G_0)\neq cvc(G_0)$. The general upper bound
$ecvc(G_0)\leq cvc(G_0)+1$ therefore gives $ecvc(G_0)=20$.
On the other hand, \Cref{thm:full-subdivision} gives
\begin{equation}\label{eq:subdivided-counterexample}
    ecvc(S(G_0))=cvc(S(G_0))=63.
\end{equation}
The subdivided graph has $92$ vertices and $120$ edges, in agreement with
$2\cdot 92-120-1=63$.

For an explicit cover, choose a spanning tree $T_0$ of $G_0$ as follows.
Start with the path
\[
    a_1,z_{12},a_2,z_{23},a_3.
\]
For each $i\in\{1,2,3\}$, attach the path $a_i,r_i,c_i$ and the leaf $q_i$
at $a_i$. Attach each private vertex to its support vertex. Finally, attach
$z_{13}$ and $z_0$ to $a_1$, attach $u_{12}$, $u_{13}$, and $u_0$ to $c_1$,
and attach $u_{23}$ to $c_2$. These edges form a tree on all $32$ vertices,
with $31$ edges. The set
\[
    C_{T_0}=V(G_0)\cup\{s_e:e\in E(T_0)\}
\]
therefore consists of $63$ vertices and induces a tree in $S(G_0)$. The
remaining $29$ vertices are precisely the subdivision vertices associated
with the edges outside $T_0$, and all have degree two.

This comparison identifies the structural difference between the original
graph and its subdivision. In $G_0$, the four-cycle
$a_1,q_1,c_1,r_1,a_1$ consists entirely of vertices of degree at least three.
Thus \Cref{prop:forest-test} shows that $G_0\notin\mathcal F$, despite
satisfying the vertex-membership condition. In $S(G_0)$, by contrast, the
vertices of degree at least three form an independent set, and every cycle
contains at least three new vertices of degree two. Both parts of the
positive criterion are therefore present after subdivision.

The graph $S(G_0)$ is not $2$-connected: each original support vertex remains
a cut vertex. Thus \eqref{eq:subdivided-counterexample} genuinely uses the
general equality criterion and is not a consequence of the minimally
$2$-connected application developed next.

\subsection{Minimally $2$-connected graphs}
\label{minimally-2-connected}

Our second application is intrinsic rather than constructional. In a
minimally $2$-connected graph, the degree-two vertices already occur with the
cycle structure required by \Cref{thm:equality-F}. Recall that a graph is
\emph{minimally $2$-connected} if it is $2$-connected and deleting any edge
destroys $2$-connectivity. Plummer proved that, if $G\neq K_3$ is minimally
$2$-connected, then every cycle of $G$ contains two nonadjacent vertices of
degree two~\cite[Corollary~2a]{plummer1968minimal}; the weaker assertion
needed here is immediate for $K_3$ as well.

\begin{theorem}\label{thm:minimally-2-connected}
Let $G$ be a minimally $2$-connected graph with $n$ vertices and $m$ edges.
Then
\[
    ecvc(G)=cvc(G)=2n-m-1.
\]
Moreover, every minimum connected vertex cover of $G$ is an eternally
winning configuration.
\end{theorem}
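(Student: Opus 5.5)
The plan is to reduce the statement to \Cref{thm:equality-F} via \Cref{prop:forest-test}. A minimally $2$-connected graph $G$ is $2$-connected, hence connected with $\delta(G)\geq 2$, so both results are available. It then suffices to check two facts. First, the vertices of degree at least three induce a forest, which gives $G\in\mathcal F$ and hence $cvc(G)=2n-m-1$. Second, every cycle of $G$ contains at least two vertices of degree two, which is condition~\ref{cycle}. Once both hold, \Cref{thm:equality-F} yields $ecvc(G)=cvc(G)$ and that every minimum connected vertex cover is eternally winning.

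The second fact is essentially Plummer's theorem as quoted above: for $G\neq K_3$, every cycle contains two nonadjacent vertices of degree two, in particular two vertices of degree two. For $G=K_3$ every vertex has degree two, so the single cycle has three degree-two vertices. I would also dispose of the trivial case $G=K_2$ if the convention allows it; here there is no cycle, $cvc=1=2\cdot2-1-1$, and $ecvc=1$ since the two endpoints can swap. Alternatively, I would simply note that $\delta\geq 2$ is needed and treat $K_2$ directly.

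The first fact follows from the second. Let $B$ be the set of vertices of degree at least three. If $G[B]$ contained a cycle, that cycle would be a cycle of $G$ with no vertex of degree two, contradicting Plummer's theorem (or the $K_3$ check). Hence $G[B]$ is a forest, and \Cref{prop:forest-test} gives $G\in\mathcal F$. By the definition of $\mathcal F$ and \Cref{counting-bound}, $cvc(G)=2n-m-1$.

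The only real obstacle is making sure Plummer's result applies in exactly the form needed, including the edge cases $K_3$ and $K_2$. If the cited corollary were unavailable, the fallback would be the standard argument. An edge $xy$ joining two vertices of degree at least three on a cycle can be deleted: removing an edge whose endpoints both have degree at least three that lies on a cycle whose other vertices also have degree at least three would preserve $2$-connectivity, which is exactly what minimality forbids. Proving that directly is the delicate part, so I would rely on the citation.
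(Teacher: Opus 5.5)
Your main argument is the paper's proof. Plummer's theorem, with $K_3$ checked by hand, rules out a cycle inside $G[B]$, so $G\in\mathcal F$ by \Cref{prop:forest-test}. The same theorem supplies the cycle condition, and \Cref{thm:equality-F} then gives the rest.

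Two side remarks need correcting, though neither affects that argument.

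First, $2\cdot 2-1-1=2$, not $1$, while $cvc(K_2)=ecvc(K_2)=1$. So $K_2$ is not a trivially verified case: the formula fails there. It must be excluded by the standard convention that a $2$-connected graph has at least three vertices. That convention is also what gives $\delta(G)\geq 2$, which \Cref{counting-bound} needs.

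Second, your fallback claim is false as stated. Take two copies of $K_4$ sharing a single vertex $z$. Add an edge $xy$ with $x\neq z$ in the first copy and $y\neq z$ in the second. This graph is $2$-connected. Pick further vertices $p$ in the first copy and $q$ in the second; the cycle $x\,y\,q\,z\,p\,x$ then has every vertex of degree at least three. Yet $G-xy$ has the cut vertex $z$, so this edge cannot be deleted while preserving $2$-connectivity. Plummer's result is not proved by deleting an edge of such a cycle, so relying on the citation, as you chose to, is the right call.
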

\begin{proof}
Since $G$ is $2$-connected, $\delta(G)\geq 2$. Put
$B=\{v\in V(G):d_G(v)\geq 3\}$. Plummer's theorem implies that $G[B]$ is a
forest, since a cycle contained in $G[B]$ would have no vertex of degree
two. By \Cref{prop:forest-test}, $G\in\mathcal F$, and hence
$cvc(G)=2n-m-1$. Plummer's theorem also gives the cycle condition in
\Cref{thm:equality-F}. Applying that theorem proves
$ecvc(G)=cvc(G)$ and shows that every minimum connected vertex cover is an
eternally winning configuration. \qed
\end{proof}

\section{Conclusion}

In this paper, we studied when the eternal connected vertex cover number equals the minimum connected vertex cover number. We first showed that the condition that every vertex belongs to some minimum connected vertex cover is not sufficient for equality. We then proved a sharp lower bound for graphs with minimum degree at least two and characterized the graphs attaining it. For this class, we showed that equality holds if and only if every vertex belongs to some minimum connected vertex cover, or equivalently, every cycle contains at least two vertices of degree two. We also showed that every minimum connected vertex cover is not a winning configuration whenever equality between $ecvc(G)$ and $cvc(G)$ holds.

Finally, we applied these results to full subdivisions of connected graphs with minimum degree at least two and to minimally $2$-connected graphs, obtaining exact formulas for their eternal connected vertex cover numbers.

\section*{Declaration of generative AI use}
During the preparation of this manuscript, the authors used ChatGPT to assist in developing the counterexample in \Cref{sec:Counter-example,sec:modified-example}, identifying minimally $2$-connected graphs as a class satisfying $ecvc(G)=cvc(G)$, improving the language and organization of the manuscript. The authors reviewed and revised the AI-assisted material and take full responsibility for the mathematical correctness and final content of the manuscript.

\bibliographystyle{plain}
\bibliography{references}

@inproceedings{fujito2020eternal,
  title={Eternal connected vertex cover problem},
  author={Fujito, Toshihiro and Nakamura, Tomoya},
  booktitle={International Conference on Theory and Applications of Models of Computation},
  pages={181--192},
  year={2020},
  organization={Springer}
}

@article{plummer1968minimal,
  title={On minimal blocks},
  author={Plummer, Michael D},
  journal={Transactions of the American Mathematical Society},
  volume={134},
  number={1},
  pages={85--94},
  year={1968},
  publisher={JSTOR}
}

@article{klostermeyer2009edge,
  title={Edge protection in graphs},
  author={Klostermeyer, William and Mynhardt, Kieka},
  journal={Australasian Journal of Combinatorics},
  volume={45},
  pages={235--250},
  year={2009}
}

@article{babu2022graphs,
  title={On graphs whose eternal vertex cover number and vertex cover number coincide},
  author={Babu, Jasine and Chandran, L Sunil and Francis, Mathew and Prabhakaran, Veena and Rajendraprasad, Deepak and Warrier, Nandini J},
  journal={Discrete Applied Mathematics},
  volume={319},
  pages={171--182},
  year={2022},
  publisher={Elsevier}
}

@inproceedings{misra2025characterization,
  title={A Characterization of Spartan Graphs and New Lower Bounds for Eternal Vertex Cover},
  author={Misra, Neeldhara and Nanoti, Saraswati Girish},
  booktitle={45th IARCS Annual Conference on Foundations of Software Technology and Theoretical Computer Science (FSTTCS 2025)},
  pages={45--1},
  year={2025},
  organization={Schloss Dagstuhl--Leibniz-Zentrum f{\"u}r Informatik}
}

@inproceedings{paul2023some,
  title={Some algorithmic results for eternal vertex cover problem in graphs},
  author={Paul, Kaustav and Pandey, Arti},
  booktitle={International Conference and Workshops on Algorithms and Computation},
  pages={242--253},
  year={2023},
  organization={Springer}
}

@article{paul2025eternal,
  title={Eternal connected vertex cover problem in graphs: Complexity and algorithms},
  author={Paul, Kaustav and Pandey, Arti},
  journal={Theoretical Computer Science},
  pages={115509},
  year={2025},
  publisher={Elsevier}
}

\end{document}